\theoremstyle{definition}
\theoremstyle{remark}
\numberwithin{equation}{section}
\title[Solvable Leibniz superalgebras with filiform nilradical]{%
    Solvable Leibniz superalgebras whose nilradical has the characteristic sequence $(n-1,1  \ | \ m)$ and nilindex $n+m$
    }
\author{%
    Abror Khudoyberdiyev, Khosiyat Muratova.
    }
\abstract{%
    Leibniz superalgebras with nilindex $n + m$ and characteristic sequence $(n-1, 1 \ | \ m)$  divided into four parametric classes that contain a set of non-isomorphic superalgebras. In this paper, we give a complete classification of solvable Leibniz superalgebras whose nilradical is a nilpotent Leibniz superalgebra with nilindex $n + m$ and characteristic sequence $(n-1, 1 \ | \ m)$.
We obtain a condition for the value of parameters of the classes of such nilpotent superalgebras for which they have a solvable extension. Moreover, the classification of solvable Leibniz superalgebras whose nilradical is a Lie superalgebra with the maximal nilindex is given.
    }
\keywords{%
    Leibniz algebras, Leibniz superalgebras, solvable superalgebras, nilradical, derivations, characteristic sequence, nilindex.
    }
\begin{document}

\section{Introduction}
The theory of supervariety and superalgebras is one of the important direction of modern mathematics which generalizes many objects from differential and algebraic geometry.
The interest in superalgebras is explained by their ability to unify bosons and fermions in physics,
to integrate the groups of internal and dynamic symmetries into one complex, and to transfer all fundamental powers into a unified field.
Lie supergroups and superalgebras are the most widely used supermanifolds in theory of supersymmetry,
and the structure theory and classification problems of Lie superalgebras are important problems in non-associative algebras.
Lie superalgebras are generalizations of most important object of Lie algebras and for many years they attract
the attention of both the mathematicians and physicists \cite{Berezin, Dusan, Gilg, Kac, Lietes}.

In Lie theory, there are many works devoted to the study of finite-dimensional solvable and nilpotent Lie algebras.
First, we recall that in 1945 it was proved by A.I. Malcev that a solvable Lie algebra is fully determined by its nilradical
\cite{Malcev}. Further, in 1963, Mubarakzjanov developed a method for constructing solvable algebras using the nilradical and its nil-independent derivations \cite{Muba}.
Using this method, a number of solvable Lie algebras with given nilradicals were constructed, such as: abelian, Heisenberg, filiform, quasi-filiform algebras and others \cite{AnCaGa3, An2, Ngomo, RubWint, Snobl, Wang}.

The notion of Leibniz algebras was introduced in \cite{Loday} as
a non-antisymmetric generalization of Lie algebras.
In recent years it has been a common theme to extend various results from Lie algebras to Leibniz algebras \cite{AyupovBook}.
Specifically, variations of Engel's theorem for Leibniz algebras have been proved by different authors and D. Barnes proved Levi's theorem for Leibniz algebras \cite{Bar}.
The analog of Mubarakzjanov's result has been applied to Leibniz
algebras case in \cite{Casas}, which shows the importance of the
consideration of their nilradicals in the Leibniz algebra case as
well. The papers \cite{AlvKay, Kay1, Kay2, Kay3, LinBos, Gay, KhLO, KhRSH} are also devoted
to the algebraic and geometric classification of some important classes of finite-dimensional Leibniz algebras.

Leibniz superalgebras are generalizations of Leibniz algebras, and on the other hand, they
naturally generalize Lie superalgebras.
Leibniz superalgebras first were considered in \cite{Livernet} under the name of graded Leibniz algebras.
The concept of Leibniz superalgebra and its
cohomology was first introduced by Dzhumadil'daev in
\cite{dzhumadil}. The term Leibniz superalgebras was first used in \cite{Alb}, whose nilpotent Leibniz
superalgebras with the maximal index of nilpotency were classified. It should be noted that Lie superalgebras with
maximal nilindex were classified in \cite{GomezKhak}.
The distinctive property of  Leibniz superalgebra is that the maximal nilindex of $(n+m)$-dimensional Leibniz superalgebra is equal to $n+m+1$.
The description of Leibniz superalgebras with dimensions of even and odd parts equal to $n$ and $m$, respectively, and with nilindex $n + m$ were classified
by applying restrictions the invariant called characteristic sequences in \cite{FilSup, FS, C-G-O-Kh, G-O-Kh}.

The next natural step in the theory of finite-dimensional Leibniz (Lie) superalgebras is to extend the method of classification of solvable superalgebras with their nilradical.
It should be noted that the structures of solvable Lie and Leibniz superalgebras are more complex than the structures of solvable Lie and Leibniz algebras. In particular, an analog of Lie's theorem is not yet known even for Lie superalgebras. However, the corollary to the Lie theorem that the square of a solvable algebra is nilpotent is not true for Lie superalgebras, that is, in \cite{Dusan} an example was constructed of a solvable Lie superalgebra whose square is not nilpotent.
Despite all the difficulties, in \cite{Nav, CNO2} the solvable extension method for Leibniz superalgebras was established.
The papers \cite{Bull, salg} are also devoted to the description of solvable Lie and Leibniz superalgebras.
In particular, in the papers \cite{Nav, Bull}, solvable Lie and Leibniz superalgebras are obtained for whose nilradical has the maximal index of nilpotency.
Some solvable Leibniz superalgebras for whose nilradical has the nilindex $n+m$ were classified in \cite{UzMat, MIB}.
In this paper, we give a classification of solvable Leibniz superalgebras whose nilradical has the index of nilpotency $n + m$ and characteristic sequence $(n-1, 1 | m)$.
Specific values are obtained for the parameters of the classes of such nilpotent superalgebras for which they have a solvable extension.

\section{Preliminaries}

In this section, we give definitions and facts concerning Lie and Leibniz's superalgebras.
\begin{definition} A $\mathbb{Z}_2$-graded vector space $G=G_0\oplus G_1$  is called a Lie superalgebra if it is equipped with a product $[-,-]$ which satisfies the following conditions:

1. $[x,y]=-(-1)^{\alpha\beta}[y,x],$ for any $x\in G_\alpha,$
$y\in G_\beta,$

2. $(-1)^{\alpha\gamma}
[x,[y,z]]+(-1)^{\alpha\beta}[y,[z,x]]+(-1)^{\beta\gamma}[z,[x,y]]=0$

for any $x\in G_\alpha,$ $y\in G_\beta,$ $z\in G_\gamma$(Jacobi superidentity).
\end{definition}

\begin{definition} A $\mathbb{Z}_2$-graded vector space $L=L_0\oplus L_1$ is called a Leibniz superalgebra if it is equipped with a product $[-,-]$ which satisfies the following condition:
\[ \big[x,[y,z]\big]=\big[[x,y],z\big] - (-1)^{\alpha \beta}\big[[x,z],y\big]- \mbox{Leibniz superidentity} \] for all $x\in L, y\in L_{\alpha}, z\in L_{\beta}.$
\end{definition}

The vector spaces $L_0$ and $L_1$ are said to
be the even and odd parts of the superalgebra $L$,
respectively. It is obvious that $L_0$ is a Leibniz algebra and $L_1$ is a representation of $L_0.$
Note that if in Leibniz superalgebra $L$ the identity
$$[x,y]=-(-1)^{\alpha\beta} [y,x]$$ holds for any $x \in
L_{\alpha}$ and $y \in L_{\beta},$ then the Leibniz superidentity
can be transformed into the Jacobi superidentity. Thus, Leibniz
superalgebras are generalization of Lie superalgebras.

The notions of nilpotency and solvability of Leibniz superalgebras are defined in the same way as for Leibniz algebras.
For solvable Leibniz superalgebras we have that a Leibniz superalgebra $L$ is solvable if and only if its Leibniz algebra $L_0$ is solvable.
The concept of derivations of superalgebras differs from the notion of derivations of algebras, and as in a $\mathbb{Z}_2$-graded algebra, the space of derivations consists of even and odd subspaces.
Recall, now the definition of superderivations of Leibniz superalgebras \cite{Kac, KhudOmirov}.

\begin{definition}\label{difdef} A superderivation (or derivation) of a superalgebra $L$ of degree $s$ is a linear map $D: L \rightarrow L$ satisfying the following condition:
\[D([x,y])=[D(x), y] + (-1)^{s\cdot \alpha} [x, D(y)],\]
where $x \in L_{\alpha}, y \in L$ and $s, \alpha \in \mathbb{Z}_2$
\end{definition}

For convenience, let us shorten "derivation of even degree" to just even derivation.
Linear operator $R_x : L\rightarrow L, \ x \in L$ such that $R_x(y)=(-1)^{\alpha \beta}[y,x],$ $x \in L_{\alpha}, y\in L_{\beta}$ is called a right multiplication operator.
It is known that such an operator is a derivation of the Leibniz superalgebra $L$ of degree $s$ for $x \in L_s.$

Engel's theorem and its direct consequences remain valid for Leibniz superalgebras. In particular, a Leibniz superalgebra $L$ is nilpotent if and only if $R_x$ is nilpotent for every homogeneous element $x$ of $L.$ Here is the definition of nil-independency of the superderivation of degree $s$ imitated from Lie case (see \cite{Muba}).

\begin{definition}
Let $d_1, d_2, \dots, d_n$ be derivations of a Leibniz superalgebra $L$ of degree $s.$ The derivations $d_1, d_2, \dots, d_n$ are said to be a linearly
nil-independent if for $\alpha_1, \alpha_2, \dots ,\alpha_n \in \mathbb{C}$
and a natural number $k$
$$(\alpha_1d_1 + \alpha_2d_2+ \dots +\alpha_nd_n)^k=0\ \hbox{implies }\ \alpha_1 = \alpha_2= \dots =\alpha_n=0.$$
\end{definition}

Note that the maximal nilpotent ideal $N$ of the Leibniz superalgebra $L$ such that $[L, L] \subset N$ is called a nilradical.
In \cite{CNO2} important results regarding the solvable extension method for the finite-dimensional case are given and
it is shown that solvable Lie and Leibniz superalgebras can be described using nil-independent even derivations of the nilradical.
Additionally, it is proved that the dimension of a solvable Leibniz superalgebra with a given nilradical is bounded by the maximal number of nil-independent even derivations of the nilradical.

\begin{definition} The set $$\operatorname{Ann}_r(L)=\left\{ z\in L\ |\ [L,
z]=0\right\}$$ is called the right annihilator  of the superalgebra
$L.$
\end{definition}

Note that, elements of the form $[a,b]+(-1)^{\alpha \beta}[b,a],$ ($a \in
L_{\alpha}, \ b \in L_{\beta}$) are contained in $\operatorname{Ann}_r(L)$.

Let $L=L_0\oplus L_1$ be a nilpotent Leibniz superalgebra.
Operator $R_x, x \in L_0$ is a nilpotent endomorphism of the space $L_i$, where $i\in \{0,1\}$.
Taking into account the property of complex field we can consider the Jordan form of $R_x$. Denote by $C_i(x)(i\in \{0,1\})$ the descending sequence of the Jordan
blocks with dimensions of $R_x$. Consider the lexicographical order on the set $C_i(L_0)$.

\begin{definition}  A sequence
$$C(L)=\big(\mathop {\max
}\limits_{x \in L_0\setminus L_0^2}C_0(x)| \mathop {\max
}\limits_{\widetilde{x} \in L_0\setminus L_0^2}C_1(\widetilde{x}) \big)$$
is said to be the characteristic sequence of the Leibniz superalgebra $L$.
\end{definition}


%
%

\section{Solvable Leibniz superalgebras whose nilradical is a Lie superalgebra with maximal nilindex}

In this section, we give the description of solvable Leibniz superalgebras whose nilradical is a Lie superalgebra with the maximal index of nilpotency.
 Note that $(n+m)$-dimensional Lie superalgebra with nilindex $n+m$ exists only for $n=2$, $m$ is odd and the multiplication table of a superalgebra is as follows:
\[N_{2,m}: \left\{ \begin{array}{ll} [y_i,e_1]=y_{i+1}, & 1\leq i\leq m-1, \\[1mm] [y_{m+1-i},y_i]=(-1)^{i+1}e_2, & 1\leq i\leq \frac{m+1}{2}.\end{array}\right.\]

In the next theorems, we present the classification of solvable Leibniz (Lie) superalgebras with the nilradical $N_{2,m},$ which implies using the results of
the works \cite{Nav} and \cite{Bull}.
It should be noted that here we give list of solvable Lie superalgebras after some minor corrections and add the list of solvable non-Lie Leibniz superalgebras.

\begin{theorem}
 Let $L=L_0\oplus L_1$ be a $(m+3)$-dimensional solvable Leibniz superalgebra whose nilradical is isomorphic to $N_{2,m}$.
Then $L$ is isomorphic to one of the following pairwise non-isomorphic superalgebras:
$$M_1:\begin{cases}
[e_1,x]=-[x,e_1]=e_1,\\[1mm]
[x,x]=e_2,\\[1mm]
[y_i,e_1]=-[e_1,y_i]=y_{i+1},& 1\leq i\leq m-1,\\[1mm]
[y_{m+1-i},y_i]=-[y_i,y_{m+1-i}]=(-1)^{i+1}e_2,& 1\leq i\leq \frac{m+1}{2},\\[1mm]
[y_i,x]=-[x,y_i]=(i-\frac{m+1}{2})y_i,& 1\leq i\leq m.
\end{cases}$$
\[M_2(\alpha):\left\{\begin{array}{lll}
[e_1,x]=-[x,e_1]=e_1,\\[1mm]
[e_2,x]= -[x,e_2]=\alpha e_2,& \\[1mm]
[y_i,e_1]=-[e_1,y_i]=y_{i+1},& 1\leq i\leq m-1,\\[1mm]
[y_{m+1-i},y_i]=-[y_i,y_{m+1-i}]=(-1)^{i+1}e_2,& 1\leq i\leq \frac{m+1}{2},\\[1mm]
[y_i,x]=-[x,y_i]=(i+\frac{\alpha-m-1}{2}) y_i,& 1\leq i\leq m,
\end{array}\right.\]
\[M_3:\left\{\begin{array}{lll}
[e_1,x]=-[x,e_1]=e_1+e_2,\\[1mm]
 [e_2,x]= -[x,e_2]=e_2,\\[1mm]
[y_i,e_1]=-[e_1,y_i]=y_{i+1},& 1\leq i\leq m-1,\\[1mm]
[y_{m+1-i},y_i]=-[y_i,y_{m+1-i}]=(-1)^{i+1}e_2,& 1\leq i\leq \frac{m+1}{2},\\[1mm]
[y_i,x]=-[x,y_i]=(i-\frac{m}{2})y_i,& 1\leq i\leq m,\\[1mm]
\end{array}\right.\]
\[ M_4(b_2,b_4,\dots,b_{m-1}): \left\{\begin{array}{lll}
[e_2,x]=-[x,e_2]=2e_2,\\[1mm]
[y_i,e_1]=-[e_1,y_i]=y_{i+1},& 1\leq i\leq m-1,\\[1mm]
[y_{m+1-i},y_i]=-[y_i,y_{m+1-i}]=(-1)^{i+1}e_2,&1\leq i \leq \frac{m+1}{2},\\[1mm]
[y_i,x]= -[x,y_i]=y_i+\sum\limits_{k=1}^{[\frac{m-i+1}{2}]}b_{2k}y_{i+2k-1}, &1\leq i\leq m,
\end{array}\right.\]
Note that the first nonzero parameter of the algebra $M_4(b_2,b_4,\dots,b_{m-1})$ can be reduced to 1.
\end{theorem}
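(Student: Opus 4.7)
The approach is to apply the solvable extension machinery of \cite{CNO2} in the minimal case where $\dim L - \dim N_{2,m} = 1$. Since the odd part of $N_{2,m}$ already fills $L_1$, the complementary element $x$ lies in $L_0$, and $d := R_x|_{N_{2,m}}$ is a non-nilpotent even derivation of the nilradical, uniquely determined up to rescaling $x \mapsto \lambda x$ and automorphisms of $N_{2,m}$. The plan is therefore: (i) compute the even derivations of $N_{2,m}$; (ii) fix a convenient representative of $d$ up to this equivalence; (iii) determine $[x,x] \in N_0$; (iv) verify the Leibniz superidentity and pairwise non-isomorphism of the listed families.

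For (i), writing $d(e_1) = a e_1 + a' e_2$ and $d(y_1) = \sum_{j=1}^{m} c_j y_j$, and iterating the derivation rule on $[y_i,e_1] = y_{i+1}$, one determines $d(y_i)$ inductively; applying it to $[y_{m+1-i},y_i] = (-1)^{i+1} e_2$ fixes $d(e_2)$ and yields several compatibility conditions. The diagonal even derivations form a two-dimensional torus with eigenvalues $a$ on $e_1$, $2c_1 + (m-1)a$ on $e_2$, and $c_1 + (i-1)a$ on $y_i$. For (iii), since $x$ is even, $2[x,x] \in \operatorname{Ann}_r(L)$; writing $[x,x] = \alpha e_1 + \beta e_2 \in N_0$, the identity $[y_1,[x,x]] = \alpha y_2 = 0$ forces $\alpha = 0$, and $[[x,x], x] = \beta [e_2,x] = 0$ forces either $\beta = 0$ or the $e_2$-eigenvalue of $d$ to vanish.

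The classification then splits on the coarse shape of $d$. When $a \neq 0$, one rescales so that $a = 1$: if the $e_2$-eigenvalue vanishes, one may take $\beta = 1$ after a further rescaling, yielding $M_1$; if it equals a scalar $\alpha$ and $d$ is diagonalizable on $\langle e_1,e_2\rangle$, one obtains $M_2(\alpha)$ with $\beta = 0$ forced; the degeneracy $\alpha = 1$ also admits a genuine Jordan block on $\langle e_1,e_2\rangle$, producing $M_3$. When $a = 0$, non-nilpotency forces $c_1 \neq 0$ and rescaling sets $c_1 = 1$; in this branch the off-diagonal terms in $d(y_1)$ can no longer be absorbed by a change of basis $y_1 \mapsto y_1 + \sum \lambda_k y_k$ (whose conjugating effect is proportional to $a$), and a direct check from the derivation identity and the quadratic relations forces the odd-indexed coefficients to vanish while the even-indexed ones $b_{2k}$ survive as genuine moduli, yielding $M_4$; the residual scaling $y_i \mapsto \mu^i y_i$ (together with a compatible rescaling of $e_1, e_2$) normalizes the first nonzero $b_{2k}$ to $1$. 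The main technical obstacle is precisely this last branch: one must carefully track which off-diagonal coefficients in $d(y_1)$ are removable by nilradical automorphisms when $a = 0$, and then confirm that the four families $M_1$, $M_2(\alpha)$, $M_3$, $M_4$ are pairwise non-isomorphic by comparing the eigenvalue spectra and Jordan type of the adjoint action of $x$ on $\langle e_1,e_2\rangle$ and on the odd part.
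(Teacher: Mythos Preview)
The paper does not actually prove this theorem: it presents the list as drawn from \cite{Nav} and \cite{Bull} (with minor corrections and the addition of the non-Lie case $M_1$), so there is no in-paper argument to compare against directly. Your outline is precisely the solvable-extension method the paper itself carries out in detail for the families $H$ and $G$ in Section~4, so the strategy is the right one.

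There is, however, a real gap in your step (iii). In a Leibniz (as opposed to Lie) extension you must determine the \emph{left} products $[x,e_1]$, $[x,e_2]$, $[x,y_j]$ separately; they are not $-R_x$ a priori. In the paper's proofs of the analogous Theorems this is done by first reading off $\operatorname{Ann}_r(L)$ from the nilradical table (which kills $[x,e_i]$ and $[x,y_j]$ for large indices), then writing $[x,y_1]$, $[x,e_2]$, $[x,x]$ with unknown coefficients and applying the super-Leibniz identity to triples such as $\{x,y_1,y_1\}$, $\{x,e_2,y_1\}$, $\{x,x,e_2\}$. Your sketch omits this, and the specific claim ``$[[x,x],x]=\beta[e_2,x]=0$'' is not valid: nothing forces $R_x([x,x])=0$ in a Leibniz superalgebra. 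What \emph{is} automatic is $[x,[x,x]]=0$ (from the identity on $(x,x,x)$), giving $\beta\,[x,e_2]=0$; but this only yields the dichotomy ``$\beta=0$ or the $e_2$-eigenvalue vanishes'' \emph{after} you have established that $[x,e_2]$ is the expected nonzero multiple of $e_2$, i.e.\ after the missing left-action computation. Once that step is inserted (and one checks that no genuinely non-skew left products survive beyond the single relation $[x,x]=e_2$ distinguishing $M_1$), your case split on $(a,c_1)$ and the treatment of the $a=0$ branch go through and recover the stated list.
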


\begin{theorem}
 Let $L=L_0\oplus L_1$ be a $(m+4)$-dimensional solvable Leibniz superalgebra whose nilradical is isomorphic to $N_{2,m}$.
Then $L$ is isomorphic to the following Lie superalgebra:
\[M_5: \left\{\begin{array}{lll}
[e_1,x]=-[x,e_1]=e_1,\\[1mm]
[e_2,x]=-[x,e_2]=(m-1)e_2,\\[1mm]
[e_2,z]=-[z,e_2]=2e_2,\\[1mm]
[y_i,e_1]=-[e_1,y_i]=y_{i+1},& 1\leq i\leq m-1,\\[1mm]
[y_{m+1-i},y_i]=-[y_i,y_{m+1-i}]=(-1)^{i+1}e_2,&1\leq i \leq \frac{m+1}{2},\\[1mm]
[y_i, x]=-[x,y_i]=(1-i)y_i,& 1\leq i\leq m,\\[1mm]
[y_i, z]=-[z,y_i]=y_i,& 1\leq i\leq m.
\end{array}\right.\]
\end{theorem}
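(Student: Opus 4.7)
The plan is to apply the solvable-extension method of \cite{CNO2}: a solvable Leibniz superalgebra $L$ with nilradical $N$ is reconstructed, up to isomorphism, from the action of $L/N$ on $N$ by nil-independent even derivations, and $\dim L/N$ is bounded by the maximal number of linearly nil-independent even derivations of $N$. Since $\dim N_{2,m}=m+2$ and we want $\dim L=m+4$, I will first show that the space of nil-independent even derivations of $N_{2,m}$ is exactly two-dimensional, then realize both directions as outer generators $x,z$, and finally normalize the outer brackets to obtain the table $M_5$.

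First, I compute the diagonal part of the space of even derivations. Writing $D(e_1)=\lambda e_1$, $D(e_2)=\mu e_2$, $D(y_i)=\nu_i y_i$ and applying the derivation identity to the two families of defining relations $[y_i,e_1]=y_{i+1}$ and $[y_{m+1-i},y_i]=(-1)^{i+1}e_2$ yields $\nu_{i+1}=\nu_i+\lambda$ and $\mu=\nu_i+\nu_{m+1-i}$, so $\nu_i=\nu_1+(i-1)\lambda$ and $\mu=2\nu_1+(m-1)\lambda$. This is a two-parameter family, and the off-diagonal contributions (which shift the weight grading) are strictly upper-triangular on the natural basis $\{e_1,e_2,y_1,\dots,y_m\}$, hence nilpotent. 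Consequently the maximal nil-independent subspace of even derivations of $N_{2,m}$ is exactly two-dimensional, matching the prescribed $\dim L-\dim N=2$; in particular the extension is forced to be by even generators only.

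Next, I choose the canonical basis $\{D_x,D_z\}$ of this subspace corresponding to $(\lambda,\nu_1)=(1,0)$ and $(\lambda,\nu_1)=(0,1)$, producing the eigenvalue data $(1,m-1,i-1)$ and $(0,2,1)$ on $(e_1,e_2,y_i)$ respectively, and let $x,z\in L_0\setminus N_0$ be outer generators with $R_x=D_x$ and $R_z=D_z$. All mixed brackets of $x,z$ with nilradical elements are then read off directly from the eigenvalue data together with the antisymmetry on the Lie-like part. The remaining outer brackets $[x,x],[z,z],[x,z],[z,x]$ lie in $[L_0,L_0]\cap N_0=\operatorname{span}\{e_1,e_2\}$; evaluating the Leibniz superidentity on triples such as $(x,y_i,e_1)$, $(z,y_i,e_1)$ and $(x,y_{m+1-i},y_i)$ shows that every such outer bracket is determined by the eigenvalues of $x,z$ on $e_1,e_2$ and can be absorbed by a substitution $x\mapsto x+\alpha e_1+\beta e_2$, $z\mapsto z+\gamma e_1+\delta e_2$. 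After this normalization all four outer brackets vanish, antisymmetry holds for every pair, and the bracket table coincides with that of $M_5$.

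The main obstacle will be the rigidity of the normalization step. Unlike the previous theorem, where the extension by a single nil-independent derivation produces the one-parameter families $M_2(\alpha)$ and $M_4(b_2,\dots,b_{m-1})$, here the full two-dimensional torus is used and the constraint $\mu=2\nu_1+(m-1)\lambda$ fixes the relative scaling between the action on the $y_i$ and the action on $e_2$. Showing that this rigidity leaves no residual continuous parameter, so that the normal form is uniquely determined, is the delicate step; once this is in place, the isomorphism $L\cong M_5$ and the fact that $L$ is in fact a Lie superalgebra, verified via the antisymmetric combinations in $\operatorname{Ann}_r(L)$, both follow at once.
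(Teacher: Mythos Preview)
The paper does not supply its own proof of this statement; it records the result and attributes it to \cite{Nav} and \cite{Bull}, noting only that ``minor corrections'' have been made. Your strategy via the solvable-extension method of \cite{CNO2} is exactly the standard one, and it is the method the paper itself carries out in detail for the later theorems on the nilradicals $H$ and $G$, so in that sense your outline matches the intended approach.

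Two details in your sketch need tightening. First, you appeal to ``antisymmetry on the Lie-like part'' to read off the left brackets $[x,y_i],[z,y_i]$, but in a Leibniz superalgebra this is not given and must be derived. The clean argument is that $[y_i,x]+[x,y_i]\in\operatorname{Ann}_r(L)\cap L_1$, and this intersection is zero because every $y_j$ satisfies $[y_{m+1-j},y_j]=\pm e_2\neq 0$; antisymmetry on $e_1,e_2$ then follows from Leibniz identities such as the one on $(x,y_m,y_1)$. Second, your claim that the outer brackets $[x,x],[z,z]$ are ``absorbed by a substitution $x\mapsto x+\alpha e_1+\beta e_2$'' fails: once antisymmetry holds, such a shift leaves $[x,x]$ unchanged because the contributions $[e_i,x]+[x,e_i]$ cancel. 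What actually forces $[x,x]=[z,z]=0$ is the Leibniz identity directly: $[y_1,[x,x]]=0$ kills the $e_1$-component, and then the triple $(x,x,x)$ together with $[x,e_2]=-(m-1)e_2\neq 0$ kills the $e_2$-component. The substitution is genuinely needed only to normalize $[x,z]$. As a side remark, your diagonal computation gives $[y_i,x]=(i-1)y_i$, which is the value consistent with $[e_1,x]=e_1$ and $[e_2,x]=(m-1)e_2$; the entry $(1-i)$ printed in the table for $M_5$ does not satisfy the derivation constraint $\nu_{i+1}=\nu_i+\lambda$ and appears to be a misprint.
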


\section{Main result}

In this section, we give the classification of solvable Leibniz superalgebras whose nilradical has nilindex $n+m$ and characteristic sequence $(n-1,1 | m).$
Note that, in the case of $n=m=2$ there are two four-dimensional Leibniz superalgebras of nilindex four \cite{FilSup}. Solvable Leibniz superalgebras with these four-dimensional nilradicals are classified in \cite{MIB}. Thus, we consider the case $n\geq 3$ and in the following theorem we give the list of nilpotent Leibniz superalgebras with nilindex $n+m$ and the characteristic sequence $(n-1,1|m)$.

\begin{theorem}[\cite{FilSup}] \label{L}  Let $L$ be a Leibniz superalgebra of nilindex $n+m$ with characteristic sequence $(n-1,1|m)$, then $m=n-1$ or $m=n$ and there exists such a basis $\{e_1,e_2,\dots, e_n, y_1,y_2,\dots,y_{m}\}$
in superalgebra $L$ whose multiplication in this basis is as follows:

if $m=n-1$,
 $L(\alpha_4, \alpha_5, \ldots, \alpha_n, \theta):$
\begin{equation}\label{eqL}
\left\{\begin{array}{ll}
[e_1,e_1]=e_3, \quad  [e_i,e_1]=e_{i+1},&    2 \le i \le n-1,
\\[1mm]
[y_j,e_1]=y_{j+1},    & 1 \le j \le n-2,
\\[1mm]
[e_1,y_1]= \frac12 y_2, \quad [e_i,y_1]= \frac12 y_i,  &    2 \le i \le n-1,
\\[1mm]
[y_1,y_1]=e_1, \quad 
[y_j,y_1]=e_{j+1},& 2 \le j \le n-1,
\\[1mm]
[e_1,e_2]=\alpha_4e_4+ \alpha_5e_5+ \ldots +
\alpha_{n-1}e_{n-1}+ \theta e_n,& 
\\[1mm]
[e_j,e_2]= \alpha_4e_{j+2}+ \alpha_5e_{j+3}+ \ldots +
\alpha_{n+2-j}e_n,&  2 \le j \le n-2,
\\[1mm]
[y_1,e_2]= \alpha_4y_3+ \alpha_5y_4+ \ldots +
\alpha_{n-1}y_{n-2}+\theta y_{n-1},&
\\[1mm]
[y_j,e_2]= \alpha_4y_{j+2}+ \alpha_5y_{j+3}+ \ldots +
\alpha_{n+1-j}y_{n-1},&  2 \le j \le n-3, \end{array} \right.\end{equation}

$G(\beta_4, \beta_5, \ldots, \beta_n, \gamma):$
\begin{equation}\label{eqG}
\left\{\begin{array}{lll}
[e_1,e_1]=e_3,& [e_i,e_1]=e_{i+1},&    3 \leq i \leq n-1,
\\[1mm]
[y_j,e_1]=y_{j+1},& &   1 \leq j \leq n-2,
\\[1mm]
[e_1,y_1]= \frac12 y_2,&
[e_i,y_1]= \frac12 y_i,  &   3 \leq i \leq n-1,
\\[1mm]
[y_1,y_1]=e_1,&
[y_j,y_1]=e_{j+1},& 2 \leq j \leq n-1,
\\[1mm]
[e_1,e_2]=\beta_4e_4+ \beta_5e_5+ \ldots +
\beta_{n}e_n,& &
\\[1mm]
[e_j,e_2]= \beta_4e_{j+2}+ \beta_5e_{j+3}+ \ldots +
\beta_{n+2-j}e_n,& [e_2,e_2]=\gamma e_n, & 3 \le j \le n-2,
\\[1mm]
[y_j,e_2]= \beta_4y_{j+2}+ \beta_5y_{j+3}+ \ldots +
\beta_{n+1-j}y_{n-1},& & 1 \le j \le n-3, \end{array} \right.\end{equation}

if $m=n$, then $M(\alpha_4, \alpha_5, \ldots, \alpha_n, \theta, \tau):$
\begin{equation}\label{eqM} \left\{
\begin{array}{lll}
[e_1,e_1]=e_3, \quad [e_i,e_1]=e_{i+1},&     2 \le i \le n-1,
\\[1mm]
[y_j,e_1]=y_{j+1},   &  1 \le j \le n-1,
\\[1mm]
[e_1,y_1]=  \frac12 y_2, \quad  [e_i,y_1]=  \frac12 y_i, &    2 \le i \le n,
\\[1mm]
[y_1,y_1]=e_1, \quad [y_j,y_1]=e_{j+1},& 2 \le j \le n-1,
\\[1mm]
[e_1,e_2]=\alpha_4e_4+ \alpha_5e_5+ \ldots +
\alpha_{n-1}e_{n-1}+ \theta e_n,&
\\[1mm]
 [e_j,e_2]= \alpha_4e_{j+2}+
\alpha_5e_{j+3}+ \ldots + \alpha_{n+2-j}e_n,& 2 \le j \le n-2,
\\[1mm]
[y_1,e_2]= \alpha_4y_3+  \ldots +
\alpha_{n-1}y_{n-2}+\theta y_{n-1}+\tau y_n,&
\\[1mm]
[y_2,e_2]= \alpha_4y_4+ \alpha_5y_4+ \ldots +
\alpha_{n-1}y_{n-1}+\theta y_n,&
\\[1mm]
[y_j,e_2]= \alpha_4y_{j+2}+ \alpha_5y_{j+3}+ \ldots +
\alpha_{n+2-j}y_{n},&  3 \le j \le n-2,\end{array} \right.\end{equation}

$H(\beta_4, \beta_5, \ldots, \beta_n, \delta, \gamma):$
\begin{equation}\label{eqH}
\left\{\begin{array}{lll}
[e_1,e_1]=e_3,& [e_i,e_1]=e_{i+1},&    3 \leq i \leq n-1,
\\[1mm]
[y_j,e_1]=y_{j+1},&    &1 \leq j \leq n-1,
\\[1mm]
[e_1,y_1]= \frac12 y_2,&
[e_i,y_1]= \frac12 y_i,  &   3 \leq i \leq n,
\\[1mm]
[y_1,y_1]=e_1,& [y_j,y_1]=e_{j+1},& 2 \leq j \leq n-1,
\\[1mm]
[e_1,e_2]=\beta_4e_4+ \beta_5e_5+ \ldots +
\beta_{n}e_n,&&
\\[1mm]
[e_j,e_2]= \beta_4e_{j+2}+ \beta_5e_{j+3}+ \ldots +
\beta_{n+2-j}e_n,& [e_2,e_2]=\gamma e_n,& 3 \le j \le n-2,
\\[1mm]
[y_1,e_2]= \beta_4y_{3}+ \beta_5y_{4}+ \ldots +
\beta_{n}y_{n-1}+\delta y_n,\\[1mm]
[y_j,e_2]= \beta_4y_{j+2}+ \beta_5y_{j+3}+ \ldots +
\beta_{n+2-j}y_{n},& & 2 \le j \le n-2. \end{array} \right.\end{equation}

\end{theorem}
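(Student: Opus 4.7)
The strategy is to exploit the characteristic sequence $(n-1,1\,|\,m)$ to pick a Jordan-adapted basis and then propagate structure constants through the Leibniz superidentity. Since $L$ is nilpotent, Engel's theorem guarantees that $R_{e_1}$ is nilpotent for every $e_1\in L_0$. Choosing $e_1\in L_0\setminus L_0^2$ realizing the characteristic sequence, one obtains a Jordan basis $\{e_1,e_3,\dots,e_n\}$ producing the length-$(n-1)$ chain on $L_0$ via $[e_1,e_1]=e_3$ and $[e_i,e_1]=e_{i+1}$, an extra vector $e_2$ spanning the length-one block, and $\{y_1,\dots,y_m\}$ realizing the single Jordan block of $R_{e_1}|_{L_1}$ so that $[y_j,e_1]=y_{j+1}$. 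This accounts for the ``backbone'' rows of each multiplication table.

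The next step is to pin down the odd-odd brackets. The Leibniz superidentity applied to the triple $(y_1,y_1,e_1)$ yields a recursion forcing $R_{e_1}([y_1,y_1])$ to sit in the long chain. The nilindex $n+m$ being maximal precludes $[y_1,y_1]=0$, so after rescaling one may take $[y_1,y_1]=e_1$, which by induction produces $[y_j,y_1]=e_{j+1}$ for $2\le j\le n-1$. The left actions $[e_1,y_1]=\tfrac12 y_2$ and $[e_i,y_1]=\tfrac12 y_i$ then fall out by expanding $[e_i,[y_1,y_1]]$. The constraint $m\in\{n-1,n\}$ emerges in this step: the odd chain must feed back into the even chain of length $n-1$ via $[y_j,y_1]$, forcing $m\le n$, while the lower bound $m\ge n-1$ follows from the requirement that the nilindex be $n+m$.

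All remaining freedom lives in the $e_2$-direction: the brackets $[e_1,e_2]$, $[e_j,e_2]$, $[y_j,e_2]$, $[e_2,e_1]$ and $[e_2,e_2]$. I would set $[e_1,e_2]=\alpha_4e_4+\cdots+\theta e_n$ as an ansatz and then push it through the superidentities $[e_j,[e_1,e_2]]=[[e_j,e_1],e_2]-[[e_j,e_2],e_1]$ and their mixed-parity analogues. These force the characteristic ``index-shift'' pattern of coefficients visible in \eqref{eqL}--\eqref{eqH}. The dichotomy between families $L,M$ and $G,H$ arises from whether $[e_2,e_1]$ is nonzero (absorbable into the long chain, giving $L$ or $M$) or zero, in which case the free product $[e_2,e_2]=\gamma e_n$ is unlocked (giving $G$ or $H$); combined with the split $m=n-1$ versus $m=n$, this produces exactly the four parametric families.

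The main obstacle is bookkeeping: the Leibniz superidentity must be applied exhaustively to every triple drawn from $\{e_1,e_2,y_1\}$ and their iterated brackets, and the resulting polynomial constraints on $(\alpha_i,\beta_i,\gamma,\theta,\tau,\delta)$ must be shown to collapse to precisely the listed relations. A secondary subtlety is verifying that the additional parameters $\tau$ in $M$ and $\delta$ in $H$, which are only visible when $m=n$, genuinely cannot be removed by any basis change preserving both the Jordan form of $R_{e_1}$ and the normalization $[y_1,y_1]=e_1$; this requires a careful enumeration of the admissible basis transformations.
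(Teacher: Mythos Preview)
The paper does not contain a proof of this theorem: it is stated with the citation \cite{FilSup} and is quoted verbatim from that reference as background material for the subsequent classification of solvable extensions. There is therefore no ``paper's own proof'' against which to compare your attempt.

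Your outline is a plausible sketch of how the original argument in \cite{FilSup} presumably proceeds --- choosing a Jordan-adapted basis for $R_{e_1}$, normalizing $[y_1,y_1]=e_1$, and then propagating the Leibniz superidentity through the $e_2$-brackets --- but as written it is a strategy rather than a proof. Several claims you flag as routine are in fact the substantive content: the bound $m\in\{n-1,n\}$ does not follow from the single recursion you cite but requires a careful analysis of the descending central series to force nilindex exactly $n+m$; the dichotomy you describe between the $L,M$ and $G,H$ families (``whether $[e_2,e_1]$ is nonzero'') is not quite right, since in both $G$ and $H$ one has $[e_2,e_1]=0$ while in $L$ and $M$ the relation $[e_2,e_1]=e_3$ holds, and this split is tied to which Jordan block $e_2$ genuinely sits in after all basis changes are exhausted; and the ``bookkeeping'' you defer is exactly the hard part, since the superidentity on mixed triples produces overdetermined linear systems whose consistency is what singles out the parameter families. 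If you intend to supply an independent proof, you will need to carry out those computations explicitly rather than assert that they collapse.
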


First, we describe the even derivations of these nilpotent Leibniz superalgebras.
\begin{proposition}\label{difL}
An even derivation of $L(\alpha_4, \alpha_5,\dots, \alpha_n, \theta)$ has the following form:
$$\left\{\begin{array}{lll}
d(e_1)=2a_1e_1+a_2e_3+a_3e_4+\dots+a_{n-1}e_n,\\[1mm]
d(e_2)=2a_1e_2+a_2e_3+a_3e_4+\dots+a_{n-2}e_{n-1}+b_ne_n,\\[1mm]
d(e_i)=2(i-1)a_1e_i+a_2e_{i+1}+a_{3}e_{i+2}+\dots+a_{n-i+1}e_n, & 3\leq i\leq n,\\[1mm]
d(y_i)=(2i-1)a_1y_i+a_2y_{i+1}+a_{3}y_{i+2}+\dots+a_{n-i}y_{n-1}, & 1\leq i\leq n-1,\\[1mm]
\end{array}\right.$$
where $(n-3)\theta a_1=0, \quad \alpha_i a_1=0, \ 4\leq i\leq n.$
\end{proposition}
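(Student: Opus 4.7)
The plan is to exploit that an even derivation $d$ preserves the $\mathbb{Z}_2$-grading and is determined by its action on a small generating set. Because $[y_1,y_1]=e_1$, $[y_j,e_1]=y_{j+1}$ and $[e_i,e_1]=e_{i+1}$ together with $e_3=[e_1,e_1]$ produce every basis vector from $\{e_2,y_1\}$, the derivation is encoded by $d(e_2)$ and $d(y_1)$. I would start by writing $d(y_1)=\sum_{k=1}^{n-1}c_k y_k$ and $d(e_2)=\sum_{i=1}^{n}b_i e_i$ with indeterminate coefficients and then let every other value of $d$ be forced by the Leibniz superidentity.

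First I would compute $d(e_1)$ from $d([y_1,y_1])$: since $[y_1,y_k]=0$ for $k\geq 2$, this collapses to $d(e_1)=2c_1e_1+\sum_{k\geq 2}c_k e_{k+1}$, identifying $a_1:=c_1$, $a_k:=c_k$ for $k\geq 2$ and explaining why the coefficient of $e_2$ is absent. Applying $d$ inductively to the recursions $y_{j+1}=[y_j,e_1]$ and $e_{j+1}=[e_j,e_1]$ then yields the stated polynomial expressions for $d(y_i)$ and for $d(e_i)$ with $i\geq 3$, and in particular produces the eigenvalue patterns $(2i-1)a_1$ on $y_i$ and $2(i-1)a_1$ on $e_i$. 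Specialising the recursion to $e_3=[e_2,e_1]$ matches the $e_4,\dots,e_n$ coefficients of $d(e_2)$ with $a_2,\dots,a_{n-2}$ and enforces $b_1+b_2=2a_1$, leaving $b_1$ and $b_2$ individually undetermined and $b_n$ entirely free.

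The delicate part, and the principal obstacle, is the application of the derivation identity to the parametric bracket $[e_1,e_2]=\alpha_4e_4+\dots+\theta e_n$. Since $[e_1,d(e_2)]$ picks up a term $b_1e_3$ from $b_1[e_1,e_1]$ while the left-hand side has no $e_3$ component, one first obtains $b_1=0$ and hence $b_2=2a_1$. Comparing then coefficients of $e_i$ for $4\leq i\leq n-1$, the cross contributions $a_k\alpha_j$ cancel in pairs between the two sides and the residual terms collapse to a single equation $2a_1\alpha_i=0$. The coefficient of $e_n$ produces the sole anomalous constraint: the term $2(n-1)a_1\theta$ coming from $\theta\,d(e_n)$ on the left must match only $4a_1\theta$ on the right, yielding $(n-3)\theta a_1=0$. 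I would finish by verifying that the remaining brackets, in particular $[e_j,e_2]$, $[y_j,e_2]$, $[y_j,y_1]$ and $[e_i,y_1]$, impose no new restrictions once these conditions hold; by the grading together with the already-fixed eigenvalue structure, each of these checks reduces to routine bookkeeping in which the parametric terms re-enter only through the same $a_1\alpha_i$ and $a_1\theta$ combinations already forced to vanish.
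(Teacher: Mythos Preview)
Your overall strategy---parametrising $d(y_1)$ and $d(e_2)$, deriving $d(e_1)$ from $[y_1,y_1]$, and then propagating along the chains $e_{i+1}=[e_i,e_1]$ and $y_{j+1}=[y_j,e_1]$---matches the paper's approach exactly (the paper uses $[y_1,e_2]$ where you use $[e_1,e_2]$, but the two brackets carry the same parameters $\alpha_4,\dots,\alpha_{n-1},\theta$ and yield the same constraints). The derivation of $b_1=0$, $b_2=2a_1$, and of the eigenvalue pattern is correct.

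There is, however, a genuine gap in your final step. You assert that the remaining brackets, in particular $[e_j,e_2]$, ``impose no new restrictions once these conditions hold''. This is false for $j=2$. Look carefully at where the parameter $\alpha_n$ occurs in the multiplication table: it appears \emph{only} in $[e_j,e_2]=\alpha_4e_{j+2}+\dots+\alpha_{n+2-j}e_n$ for $j\ge 2$ (for $j=2$ the top coefficient is $\alpha_n$), whereas $[e_1,e_2]$ and $[y_j,e_2]$ have top coefficient $\theta$ or $\alpha_{n-1}$ respectively. Consequently your analysis of $[e_1,e_2]$ yields $a_1\alpha_i=0$ only for $4\le i\le n-1$ together with $(n-3)a_1\theta=0$; the missing relation $a_1\alpha_n=0$ must be extracted from $d([e_2,e_2])=[d(e_2),e_2]+[e_2,d(e_2)]$, exactly as the paper does. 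So the ``routine bookkeeping'' you defer is not routine for that particular bracket.

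A minor correction: in your coefficient comparison for $e_i$ with $4\le i\le n-1$, the residual equation is $2(i-1)a_1\alpha_i=(2a_1+b_2)\alpha_i=4a_1\alpha_i$, i.e.\ $2(i-3)a_1\alpha_i=0$, not literally $2a_1\alpha_i=0$. Since $i\ge 4$ this is equivalent, but it is worth stating accurately because the same pattern for $i=n$ is what produces the factor $(n-3)$ in front of $a_1\theta$.
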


\begin{proof} Let $d$ be an even derivation of a Leibniz superalgebra which belongs to the class
$L(\alpha_4,\alpha_5, \dots, \alpha_n, \theta).$ Put $$d(y_1)=a_1y_1+a_2y_2+\dots+a_{n-1}y_{y-1}, \quad d(e_2)=b_1e_1+b_2e_2+\dots+b_ne_n.$$

Using the multiplications of the superalgebra and Definition \ref{difdef},  we find the following:
$$\begin{array}{lll}
d(e_1)=d([y_1,y_1])=[d(y_1),y_1]+[y_1,d(y_1)]=2a_1e_1+a_2e_3+a_3e_4+\dots+a_{n-1}e_n,\\[1mm]
d(e_3)=d([e_2,e_1])=[d(e_2),e_1]+[e_2,d(e_1)]=(b_1+b_2+2a_1)e_3+b_3e_4+\dots+b_{n-1}e_n,\\[1mm]
\end{array}$$

On the other hand,
$$d(e_3)=d([e_1,e_1])=[d(e_1),e_1]+[e_1,d(e_1)]=4a_1e_3+a_2e_4+a_3e_4+\dots+a_{n-2}e_n.$$

Comparing the coefficients at the basic elements, we get that
$$b_1+b_2=2a_1, \ b_i=a_{i-1}, \ 3\leq i\leq n-1.$$

Since $[e_k,e_1]=e_{k+1}$ for $3\leq k\leq n,$ then from $$d(e_{k+1})=d([e_k,e_1])=[d(e_k),e_1]+[e_k,d(e_1)],$$ we have
$$d(e_i)=2(i-1)a_1e_i+a_2e_{i+1}+a_3e_{i+2}+\dots+a_{n-i+1}e_n,\quad 3\leq i\leq n.$$

Now we consider
$$d(y_2)=d([y_1,e_1])=[d(y_1),e_1]+[y_1,d(e_1)]=3a_1y_2+a_2y_3+\dots+a_{n-2}y_{n-1}.$$

From $d(y_i)=d([y_{i-1},e_1])=[d(y_{i-1}), e_1]+[y_{i-1},d(e_1)],$ inductively we get
$$d(y_i)=(2i-1)a_1y_i+a_2y_{i+1}+\dots+a_{n-i}y_{n-1}, \ 1\leq i\leq n-1.$$

Consider
\[\begin{array}{lll}
d([y_1,e_2])&=[d(y_1),e_2]+[y_1,d(e_2)]=\\[1mm]
&=[a_1y_1+a_2y_2+\dots+a_{n-1}y_{y-1}, e_2]+ \\[1mm]
&+[y_1, b_1e_1+b_2e_2+a_2e_3+\dots+a_{n-2}e_{n-1}+b_ne_n]=\\[1mm]
&=b_1y_2+(a_1+b_2)\alpha_4y_3+(a_1\alpha_5+a_2\alpha_4+\alpha_5b_2)y_4+\\[1mm]
&+(a_1\alpha_{6}+a_2\alpha_{5}+a_3\alpha_{4}+\alpha_{6}b_2)y_5+ \dots +\\[1mm]
&+(a_1\theta+a_2\alpha_{n-1}+a_3\alpha_{n-2}+a_4\alpha_{n-3}+\dots+a_{n-3}\alpha_4+\theta b_2)y_{n-1}.\\[1mm]
\end{array}\]

On the other hand,
$$\begin{array}{lll}
d([y_1,e_2])&=\alpha_4d(y_3)+ \alpha_5d(y_4)+\dots+\alpha_{n-1}d(y_{n-2})+\theta d(y_{n-1})=\\[1mm]
&=5\alpha_4a_1y_3+(\alpha_4a_2+7\alpha_5 a_1)y_4+(\alpha_4a_3+\alpha_5a_2+9\alpha_6a_1)y_5+\dots+\\[1mm]
&+(\alpha_4a_{n-3}+\alpha_5a_{n-4}+\alpha_6a_{n-5}+\dots+\alpha_{n-1}a_2+(2n-3)\theta a_1)y_{n-1}.
\end{array}$$

Comparing the coefficients at the basis elements, we obtain that
$$ b_1=0,\quad (n-3)\theta a_1=0, \quad \alpha_i a_1=0, \ 4\leq i\leq n-1.$$

From $d([e_2,e_2])=[d(e_2),e_2]+[e_2,d(e_2)],$ we have $\alpha_n a_1=0.$
Verification of the property of derivation for the other products give the identity or already obtained restrictions.
\end{proof}

Similarly, to Proposition \ref{difL}, we have the description of the even derivations of the superalgebras for the other classes.

\begin{proposition}\label{difM}
An even derivation of  $M(\alpha_4, \alpha_5,\dots, \alpha_n, \theta, \tau)$ has the following form:
$$\left\{\begin{array}{lll}
d(e_1)=2a_1e_1+a_2e_3+a_3e_4+\dots+a_{n-1}e_n,\\[1mm]
d(e_2)=2a_1e_2+a_2e_3+a_3e_4+\dots+a_{n-2}e_{n-1}+b_ne_n,\\[1mm]
d(e_i)=2(i-1)a_1e_i+a_2e_{i+1}+a_{3}e_{i+2}+\dots+a_{n-i+1}e_n, & 3\leq i\leq n,\\[1mm]
d(y_i)=(2i-1)a_1y_i+a_2y_{i+1}+a_{3}y_{i+2}+\dots+a_{n-i+1}y_n, & 1\leq i\leq n,\\[1mm]
\end{array}\right.$$
where $\theta a_1=0, \quad \tau a_1=0, \quad \alpha_i a_1=0, \ 4\leq i\leq n.$
\end{proposition}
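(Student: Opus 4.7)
The plan is to mimic the argument given for Proposition \ref{difL}, adapting it to the extra odd basis vector $y_n$ and to the two new tail terms $\tau y_n$ in $[y_1,e_2]$ and $\theta y_n$ in $[y_2,e_2]$. First I would record the generic homogeneous ansatz
$$d(y_1)=a_1 y_1+a_2 y_2+\cdots+a_n y_n,\qquad d(e_2)=b_1 e_1+b_2 e_2+\cdots+b_n e_n,$$
compute $d(e_1)$ from $[y_1,y_1]=e_1$, and match $d([e_1,e_1])$ against $d([e_2,e_1])$ to obtain $b_1+b_2=2a_1$ and $b_i=a_{i-1}$ for $3\le i\le n-1$, exactly as in the proof of Proposition \ref{difL}.

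Next I would run the two straightforward inductions based on $[e_i,e_1]=e_{i+1}$ for $2\le i\le n-1$ and $[y_j,e_1]=y_{j+1}$ for $1\le j\le n-1$, which produce closed formulas for $d(e_i)$ and $d(y_i)$ in terms of $a_1,a_2,\ldots$. The only structural novelty compared with the $L$-case is that the second induction now reaches $j=n-1$, so $d(y_i)$ carries one extra term ending in $a_{n-i+1}y_n$, matching the formula in the statement.

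All parametric constraints then come from testing $d$ against the $e_2$-products. Expanding $d([y_1,e_2])$ in two ways and comparing coefficients of $y_2,y_3,\ldots,y_{n-1},y_n$ should yield $b_1=0$, the conditions $\alpha_i a_1=0$ for $4\le i\le n-1$, the sharpened relation $\theta a_1=0$ (sharpened because the tail of $[y_1,e_2]$ now includes $\tau y_n$, so the $y_{n-1}$-balance is no longer absorbed by an $\alpha_j$-sum as it was in the $L$-case), and the genuinely new identity $\tau a_1=0$ coming from the $y_n$-coefficient. The remaining constraint $\alpha_n a_1=0$ should then fall out either from applying $d$ to $[y_2,e_2]$ (whose tail $\theta y_n$ propagates the pattern one step further) or from $d$ applied to a suitable $[e_j,e_2]$. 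A final routine check on the remaining defining relations, such as $[e_i,y_1]=\tfrac12 y_i$ and $[y_j,y_1]=e_{j+1}$, should give only identities or already-recorded restrictions.

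The main obstacle I anticipate is the bookkeeping in the $y_{n-1}$ and $y_n$ coefficient balances: I must verify that the enlarged odd part introduces exactly the two new restrictions $\theta a_1=0$ and $\tau a_1=0$ (rather than weaker or stronger ones), and, just as importantly, that no additional free parameter becomes available, for instance that no independent $c\,y_n$-shift in $d(y_1)$ or independent correction to $d(e_n)$ is allowed by the system. Once the $y_n$-balance is shown to fix these coefficients in terms of $a_1,\ldots,a_{n-1}$ and $b_n$, the statement follows.
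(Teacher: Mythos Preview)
Your overall strategy---mirror the computation of Proposition~\ref{difL} and track the extra odd generator $y_n$---is exactly what the paper intends (its proof is the single line ``the proof is carried out using the property of derivation''). The inductions on $[e_i,e_1]$ and $[y_j,e_1]$, the relations $b_1+b_2=2a_1$, $b_i=a_{i-1}$ for $3\le i\le n-1$, and the extraction of $b_1=0$, $\alpha_i a_1=0$ ($4\le i\le n-1$), and $\tau a_1=0$ from $d([y_1,e_2])$ all proceed as you outline.

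However, your proposed mechanism for the ``sharpened'' relation $\theta a_1=0$ does not work. The extra term $\tau y_n$ in $[y_1,e_2]$ contributes only to the $y_n$-coefficient, since $d(y_n)$ has no $y_{n-1}$-component; the $y_{n-1}$-balance is therefore literally identical to the one in the $L$-case and still yields $(2n-3)\theta a_1=(a_1+b_2)\theta=3a_1\theta$, i.e.\ $(n-3)\theta a_1=0$, not $\theta a_1=0$. (Similarly, $[y_2,e_2]$ contains no $\alpha_n$-term, so your first suggested source for $\alpha_n a_1=0$ also fails; as in the $L$-case one must use $d([e_2,e_2])$.) For $n\ge 4$ the factor $n-3$ is harmless and you recover $\theta a_1=0$ anyway, but the reason you give for why the constraint sharpens is incorrect, and if you actually carry out the $y_{n-1}$-comparison you will find it unchanged from the $L$-case.
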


\begin{proof}
The proof is carried out using the property of derivation.
\end{proof}

\begin{proposition}\label{difH}
An even derivation of  $H(\beta_4, \beta_5,\dots, \beta_n, \delta, \gamma)$ has the following form:
$$\left\{\begin{array}{lll}
d(e_1)=2a_1e_1+a_2e_3+a_3e_4+\dots+a_{n-1}e_n,\\[1mm]
d(e_2)=b_2e_2,\\[1mm]
d(e_i)=2(i-1)a_1e_i+a_2e_{i+1}+a_{3}e_{i+2}+\dots+a_{n-i+1}e_n, & 3\leq i\leq n,\\[1mm]
d(y_i)=(2i-1)a_1y_i+a_2y_{i+1}+a_{3}y_{i+2}+\dots+a_{n-i+1}y_{n}, & 1\leq i\leq n,\\[1mm]
\end{array}\right.$$
where \begin{equation}\label{eq4.1}\begin{array}{lll}
\beta_i(2(i-2)a_1-b_2)=0, & 4\leq i\leq n,\\[1mm]
\delta(2(n-1)a_1-b_2)=0, & \gamma((n-1)a_1-b_2)=0.
\end{array}\end{equation}
\end{proposition}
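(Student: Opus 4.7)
The plan is to follow the same strategy as in Proposition \ref{difL}, but to exploit the distinctive feature of the class $H$: the element $e_2$ is decoupled from the "$e_1$--$y_1$--chain," in the sense that $[e_2,e_1]=0$, $[e_2,y_1]=0$, $[e_2,e_k]=0$ for $k\ne 2$, and $[e_2,y_k]=0$ for $k\ne $ (nothing). This decoupling is what will collapse $d(e_2)$ down to a single scalar multiple of $e_2$.

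First I would set the two free unknowns $d(y_1)=\sum_{i=1}^{n}a_iy_i$ and $d(e_2)=\sum_{i=1}^{n}b_ie_i$, and derive $d(e_1)$, $d(e_i)$ ($3\le i\le n$), $d(y_i)$ ($1\le i\le n$) exactly as in the proof of Proposition \ref{difL}: apply $d$ to $e_1=[y_1,y_1]$; then to the recursions $e_{i+1}=[e_i,e_1]$ for $i\ge 3$ starting from $d(e_3)=d([e_1,e_1])$; and to $y_{j+1}=[y_j,e_1]$ starting from $d(y_1)$. This gives the shift-form of $d$ on all basis elements except $e_2$ and produces no constraints (only expressions for these images in terms of $a_1,a_2,\dots$).

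The key new step — and the only genuinely different one compared to Proposition \ref{difL} — is showing $d(e_2)=b_2e_2$. I would apply the derivation property to the two trivial brackets $[e_2,e_1]=0$ and $[e_2,y_1]=0$. In the first, $[e_2,d(e_1)]=0$ because $d(e_1)$ lies in $\operatorname{span}\{e_1,e_3,e_4,\dots,e_n\}$ and $e_2$ brackets trivially with all of these from the right; therefore $0=[d(e_2),e_1]=b_1e_3+\sum_{i=3}^{n-1}b_ie_{i+1}$, forcing $b_1=b_3=b_4=\dots=b_{n-1}=0$. In the second, $[e_2,d(y_1)]=0$ for the same reason, so $0=[d(e_2),y_1]=\tfrac12 b_n y_n$ (using the $b_i$ we just killed), which gives $b_n=0$.

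Finally, I would read off the constraints \eqref{eq4.1} by applying $d$ to the three remaining nontrivial brackets involving $e_2$:
\begin{itemize}
\item from $d([e_1,e_2])=d\bigl(\sum_{i=4}^{n}\beta_ie_i\bigr)$, comparing the coefficient of $e_i$ and cancelling the routine $a_j\beta_k$ cross-terms (which match automatically because the "lower-shift" part of a derivation respects the algebra multiplication), the top-weight terms give $\beta_i(2(i-2)a_1-b_2)=0$;
\item from $d([y_1,e_2])$, the coefficient of $y_n$ on both sides yields $\delta(2(n-1)a_1-b_2)=0$;
\item from $d([e_2,e_2])=d(\gamma e_n)$, a direct computation gives $2b_2\gamma=2(n-1)a_1\gamma$, i.e.\ $\gamma((n-1)a_1-b_2)=0$.
\end{itemize}
A quick check on $[e_j,e_2]$ for $3\le j\le n-2$ and on $[y_j,e_2]$ for $2\le j\le n-2$ produces no new conditions, only the same ones already listed.

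The main obstacle I anticipate is purely bookkeeping: verifying that in the expansions of $d([e_1,e_2])$ and $d([y_1,e_2])$ all the $a_j\beta_k$ cross-terms on the two sides really do cancel, so that only the clean top-weight conditions survive. This is the same sort of cancellation that implicitly occurs in the proof of Proposition \ref{difL}; once it is done for $[e_1,e_2]$ once and for all, the identical calculation transported to $y$-basis handles the $\delta$ constraint as well.
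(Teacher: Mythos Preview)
Your proposal is correct and follows exactly the approach the paper intends: the paper's own proof is just the single line ``The proof is carried out using the property of derivation,'' i.e., the same computation as in Proposition~\ref{difL} adapted to the table \eqref{eqH}. Your identification of the two vanishing brackets $[e_2,e_1]=0$ and $[e_2,y_1]=0$ as the source of $d(e_2)=b_2e_2$, and your reading of the constraints \eqref{eq4.1} from $d([e_1,e_2])$, $d([y_1,e_2])$, $d([e_2,e_2])$ (with the $a_j\beta_k$ cross-terms cancelling exactly as you anticipated), are all correct.
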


\begin{proof}
The proof is carried out using the property of derivation.
\end{proof}

\begin{proposition}\label{difG}
An even derivation of  $G(\beta_4, \beta_5,\dots, \beta_n, \gamma)$ has the following form:
$$\left\{\begin{array}{lll}
d(e_1)=2a_1e_1+a_2e_3+a_3e_4+\dots+a_{n-1}e_n,\\[1mm]
d(e_2)=b_2e_2+b_ne_n,\\[1mm]
d(e_i)=2(i-1)a_1e_i+a_2e_{i+1}+a_{3}e_{i+2}+\dots+a_{n-i+1}e_n, & 3\leq i\leq n,\\[1mm]
d(y_i)=(2i-1)a_1y_i+a_2y_{i+1}+a_{3}y_{i+2}+\dots+a_{n-i}y_{n-1}, & 1\leq i\leq n-1,\\[1mm]
\end{array}\right.$$
where $\gamma((n-1)a_1-b_2)=0, \quad \beta_i(2(i-2)a_1-b_2)=0, \ 4\leq i\leq n.$
\end{proposition}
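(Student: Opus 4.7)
The plan is to imitate the proof of Proposition~\ref{difL}, with two essential modifications reflecting the defining features of the class $G$: the product $[e_2,e_1]$ is absent from the multiplication table of $G$ (so it vanishes), and the new relation $[e_2,e_2]=\gamma e_n$ is present.

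I begin by writing the ansatz
\[
d(y_1)=\sum_{i=1}^{n-1}a_i y_i, \qquad d(e_2)=\sum_{i=1}^{n}b_i e_i,
\]
and recover $d(e_1)$ from $[y_1,y_1]=e_1$. The relations $[e_1,e_1]=e_3$ and $[e_i,e_1]=e_{i+1}$ for $3\leq i\leq n-1$ then determine $d(e_i)$ inductively, and $[y_{j-1},e_1]=y_j$ determines $d(y_j)$ for $2\leq j\leq n-1$. Because the relations used here in $G$ coincide with those used in Proposition~\ref{difL}, the resulting formulas for $d(e_i)$ and $d(y_j)$ match those stated.

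The decisive new input is the vanishing product $[e_2,e_1]=0$. Its image under the Leibniz rule gives
\[
0=[d(e_2),e_1]+[e_2,d(e_1)]=b_1 e_3 + b_3 e_4 + b_4 e_5 + \dots + b_{n-1} e_n,
\]
because $[e_2,e_k]=0$ for every $k\neq 2$ in $G$. This forces $b_1=b_3=b_4=\dots=b_{n-1}=0$, so $d(e_2)=b_2 e_2 + b_n e_n$, exactly the advertised shape.

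Finally, applying $d$ to $[e_1,e_2]=\beta_4 e_4+\dots+\beta_n e_n$ and matching the coefficient of each $e_i$ produces, after the symmetric telescoping contributions $\sum_j a_j \beta_{i-j+1}$ cancel between the two sides, the restriction $\beta_i(2(i-2)a_1 - b_2)=0$ for $4\leq i\leq n$. Applying $d$ to $[e_2,e_2]=\gamma e_n$ analogously yields $\gamma((n-1)a_1 - b_2)=0$. I would then verify that the remaining relations $[e_j,e_2]$ with $j\geq 3$, $[y_j,e_2]$ with $j\geq 1$, and $[y_j,y_1]$ with $j\geq 2$ contribute no further constraints. I expect the main obstacle to be the bookkeeping required to see the cancellation of the telescoping sums in these last coefficient comparisons; this is essentially mechanical but must be done carefully.
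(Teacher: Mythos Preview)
Your proposal is correct and follows precisely the approach the paper intends: the paper's proof is the single line ``The proof is carried out using the property of derivation,'' i.e.\ it defers to the method of Proposition~\ref{difL}, and you have carried out exactly that adaptation, correctly identifying that the vanishing of $[e_2,e_1]$ (in contrast to $[e_2,e_1]=e_3$ in the class $L$) is what forces $b_1=b_3=\dots=b_{n-1}=0$ and hence the special shape of $d(e_2)$. The only cosmetic difference is that in Proposition~\ref{difL} the paper reads off the $\beta$-constraints from $[y_1,e_2]$ whereas you use $[e_1,e_2]$; both products carry the same coefficients $\beta_4,\dots,\beta_n$ and yield identical restrictions.
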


\begin{proof}
The proof is carried out using the property of derivation.
\end{proof}

From Proposition \ref{difL}, we have the following corollary.
\begin{corollary}\label{Cor1}
If $R$ is a non-nilpotent solvable Leibniz superalgebra with the nilradical from the class $L(\alpha_4, \alpha_5,\dots, \alpha_n, \theta),$ then
$\alpha_4=\alpha_5=\dots=\alpha_n=\theta=0.$
\end{corollary}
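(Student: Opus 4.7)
The plan is to invoke the solvable-extension theory for Leibniz superalgebras developed in \cite{CNO2}: a solvable Leibniz superalgebra with nilradical $N$ is built from $N$ by adjoining linearly nil-independent even derivations, and since $R$ is non-nilpotent this family must be nontrivial. By the very definition of nil-independence recorded in the preliminaries, each adjoined even derivation $d$ is itself non-nilpotent (otherwise $1\cdot d$ would be a nontrivial nilpotent combination).

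I would then combine this with Proposition \ref{difL}, which gives the complete parametric description of even derivations of $L(\alpha_4,\dots,\alpha_n,\theta)$. The key structural observation is that, with respect to the ordered basis $e_1, e_2, \dots, e_n, y_1, \dots, y_{n-1}$, the diagonal of $d$ is governed by the single scalar $a_1$ (entries $2a_1$ at $e_1,e_2$, then $2(i-1)a_1$ at $e_i$ for $i\ge 3$, and $(2i-1)a_1$ at $y_i$), while every remaining free parameter $a_2,\dots,a_{n-1},b_n$ sits strictly above the diagonal and shifts basis vectors forward in the ordering. Consequently, $a_1=0$ would render $d$ strictly upper triangular and hence nilpotent; non-nilpotence of $d$ therefore forces $a_1\neq 0$.

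Once $a_1\neq 0$ is secured, the side constraints listed in Proposition \ref{difL}, namely $\alpha_i a_1 = 0$ for $4\le i\le n$ and $(n-3)\theta a_1 = 0$, immediately yield $\alpha_4 = \cdots = \alpha_n = 0$ and (for $n\ge 4$) $\theta=0$. I do not anticipate any substantive obstacle: the entire argument reduces to observing that the ``scaling'' parameter $a_1$ must be nonzero and reading off the vanishing of the remaining parameters from the constraints already computed. The only subtle point worth flagging is the degenerate case $n=3$, in which the $\theta$-constraint is vacuous; this case either falls under an implicit assumption $n\ge 4$ in the sequel or can be settled separately by a change of basis adjusting $e_2$ by a multiple of $e_3$ in the presence of a nontrivial scaling derivation.
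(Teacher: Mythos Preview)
Your argument is correct and is essentially the paper's own proof: both hinge on Proposition~\ref{difL}, observing that a non-nilpotent even derivation forces $a_1\neq 0$, whence the constraints $\alpha_i a_1=0$ and $(n-3)\theta a_1=0$ kill all parameters. Your explicit triangularity justification for why $a_1=0$ implies nilpotence, and your flag on the degenerate case $n=3$ (where the $\theta$-constraint is vacuous), are refinements the paper omits.
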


\begin{proof}
Suppose $\alpha_i\neq 0$ for some $i (4 \leq i \leq n)$ or $\theta \neq 0$.
Then from $$(n-3)\theta a_1=0,\quad \alpha_i a_1=0, \ 4\leq i\leq n,$$  we obtain that that $a_1=0,$ which implies the nilpotency of any even derivation of $L(\alpha_4, \alpha_5,\dots, \alpha_n, \theta).$ It is a contradiction to the non-nilpotency of $R$.
Therefore, we have $\alpha_4=\alpha_5=\dots=\alpha_n=\theta=0.$
\end{proof}

Thus, we conclude that solvable Leibniz superalgebra whose nilradical from the class $L(\alpha_4, \alpha_5,\dots, \alpha_n, \theta)$ exists only under the condition $\alpha_4=\alpha_5=\dots=\alpha_n=\theta=0$ and such solvable Leibniz superalgebras are classified in \cite{UzMat}.

\begin{theorem}\label{SL} Let $R$ be a solvable Leibniz superalgebra with nilradical $L(0, 0,\dots, 0, 0)$. Then it is isomorphic to the superalgebra
\[SL:\left\{\begin{array}{lll}
[e_1,e_1]=e_3, &[e_i,e_1]=e_{i+1},&    2 \le i \le n-1,\\[1mm]
 [y_j,e_1]=y_{j+1},&   & 1 \le j \le n-2,\\[1mm]
[e_1,y_1]= \frac12 y_2,&
[e_i,y_1]= \frac12 y_i,  &    2 \le i \le n-1,\\[1mm]
[y_1,y_1]=e_1,&[y_j,y_1]=e_{j+1},& 2 \le j \le n-1,\\[1mm]
[e_1,x]=2e_1,& [e_i,x]=2(i-1)e_i,& 2\leq i\leq n,\\[1mm]
[y_i,x]=(2i-1)y_i, & &1\leq i\leq n-1,\\[1mm]
[x,e_1]=-2e_1,&[x,y_1]=-y_1.\\[1mm]
\end{array}\right.\]
\end{theorem}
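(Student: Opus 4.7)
The plan is to construct $R$ as a one-dimensional extension of the nilradical $N := L(0,\dots,0,0)$ by an even element $x$ whose right-multiplication action is the unique (up to rescaling) nil-independent even derivation of $N$, and then to pin down all products involving $x$. By Proposition \ref{difL} specialized to the present nilradical, the even derivations of $N$ form a family parametrized by $(a_1,\dots,a_{n-1}, b_n)$, and every derivation with $a_1 = 0$ is nilpotent. Hence $N$ carries exactly one nil-independent even derivation up to scalar, so by \cite{CNO2} we have $R = N \oplus \langle x\rangle$ with $x$ even; rescaling $x$ fixes $a_1 = 1$.

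The first technical step is the normalization of the right multiplications by $x$. Since the eigenvalues $2(i-1)$ of $R_x$ on the $e_i$-chain and $2j-1$ on the $y_j$-chain are pairwise distinct, the upper-triangular action of $R_x$ can be diagonalized by a consistent basis change of $N$. Explicitly, I set $\tilde e_1 := e_1 + \sum_{k=3}^{n} \mu_k e_k$ and $\tilde y_1 := y_1 + \sum_{k=2}^{n-1} \mu_{k+1} y_k$, and then define $\tilde e_i, \tilde y_j$ for $i, j \ge 2$ via the same products that produced $e_i, y_j$ from $e_1, y_1$; I also put $\tilde e_2 := e_2 + \sum_{k=3}^{n} \nu_k e_k$ with $\nu_k = \mu_k$ for $k < n$ (forced by $[\tilde e_2, \tilde e_1] = [\tilde e_1, \tilde e_1]$) and $\nu_n$ free. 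An iterative comparison of coefficients determines the $\mu_k$ and $\nu_n$ uniquely so that $R_x(\tilde e_i) = 2(i-1)\, \tilde e_i$ and $R_x(\tilde y_j) = (2j-1)\, \tilde y_j$; dropping tildes gives the right multiplications in the target.

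Next I determine the left multiplications. Writing $[x, y_1] = \sum_j c_j y_j$, the relation $[x, y_1] + [y_1, x] \in \operatorname{Ann}_r(R)$ combined with $[y_1, x] = y_1$ forces $c_1 = -1$; a further shift $x \mapsto x + \sum_{j=2}^{n-1} \lambda_j e_j$ (choosing $\lambda_1 = 0$ so the right multiplications stay intact) kills the $c_j$ for $j \ge 2$, yielding $[x, y_1] = -y_1$. The Leibniz identity applied to $[y_1, y_1] = e_1$ then gives $[x, e_1] = 2[[x, y_1], y_1] = -2\, e_1$. Inductive Leibniz identities along the chains $y_{j+1} = [y_j, e_1]$ and $e_{i+1} = [e_i, e_1]$ yield $[x, y_j] = 0$ for $j \ge 2$ and $[x, e_i] = 0$ for $i \ge 3$. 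For $[x, e_2]$, comparing the two expansions $[x, e_3] = 0$ (via $[e_1, e_1] = e_3$) and $[x, e_3] = [[x, e_2], e_1]$ (via $[e_2, e_1] = e_3$) confines $[x, e_2]$ to $\langle e_n \rangle$, and enforcing $[x, e_2] + 2 e_2 \in \operatorname{Ann}_r(R)$ by computing $[x, [x, e_2] + 2 e_2]$ then forces $[x, e_2] = 0$. Finally, $[x, x]$ lies in the even part of $N$; the annihilator property together with the Leibniz identity $[x, [x, y_1]] = y_1$ confine it to $\langle e_n \rangle$, and a last shift $x \mapsto x + c\, e_n$ absorbs the remaining $e_n$-coefficient without touching any other relation, since $e_n$ annihilates $N$ on both sides.

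The main obstacle is the bookkeeping in the first normalization step: setting up and solving the recursive system for the $\mu_k, \nu_k$ that simultaneously zeros out $a_2, \dots, a_{n-1}$ in every $R_x(\tilde e_i)$ and $R_x(\tilde y_j)$ and $b_n$ in $R_x(\tilde e_2)$, while preserving the full multiplication table of $N$. Once this triangular reduction is in place, the remaining computations are routine applications of the Leibniz identity and the right-annihilator property, and they pin down every product uniquely.
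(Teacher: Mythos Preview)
Your approach is essentially the one the paper uses for the analogous results (Theorems \ref{solvH(0)} and \ref{solvH}): normalize $a_1=1$, diagonalize $R_x$ by an upper-triangular basis change, read off $[x,e_i]=0$ and $[x,y_j]=0$ for large indices from the right annihilator, and then pin down $[x,y_1]$, $[x,e_1]$, $[x,e_2]$ and $[x,x]$ via the superidentity and further basis shifts. The paper itself does not reproduce the argument for this particular theorem (it cites \cite{UzMat}), but your outline matches its template faithfully.

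One small imprecision: from $[[x,e_2],e_1]=0$ alone you do \emph{not} get $[x,e_2]\in\langle e_n\rangle$. Since $[e_1,e_1]=[e_2,e_1]=e_3$ in $L(0,\dots,0)$, the kernel of $R_{e_1}$ on $N_0$ is $\langle e_1-e_2,\,e_n\rangle$, so a priori $[x,e_2]=\mu(e_1-e_2)+\mu_n e_n$. Your subsequent annihilator test with $[x,\,\cdot\,]$ applied to $[x,e_2]+2e_2$ does in fact force $\mu=\mu_n=0$ (and testing with $[y_1,\,\cdot\,]$ would kill $\mu$ immediately), so the conclusion survives; just be aware that the intermediate confinement claim as stated is too strong.
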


Analogously to the Corollary \ref{Cor1}, from Proposition \ref{difM}
for the solvable Leibniz superalgebras with the nilradical from the class $M(\alpha_4, \alpha_5,\dots, \alpha_n, \theta, \tau)$
we get
\begin{corollary}\label{Cor2}
If $L$ is a non-nilpotent solvable Leibniz superalgebra with the nilradical from the class  $M(\alpha_4, \alpha_5,\dots, \alpha_n, \theta, \tau),$ then
$\alpha_4=\alpha_5=\dots=\alpha_n=\theta=\tau=0.$
\end{corollary}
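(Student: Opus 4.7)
The plan is to mirror the argument of Corollary \ref{Cor1} verbatim, using Proposition \ref{difM} in place of Proposition \ref{difL}. Since $L$ is a non-nilpotent solvable Leibniz superalgebra with nilradical $M(\alpha_4, \dots, \alpha_n, \theta, \tau)$, the results of \cite{CNO2} recalled in the Preliminaries guarantee the existence of an even derivation $d$ of $M$ which is \emph{not} nilpotent (indeed, if every even derivation were nilpotent, then every right multiplication $R_x$ for $x \in L_0$ would be nilpotent, and by Engel's theorem $L$ would be nilpotent).

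Next I would read off from the explicit form given in Proposition \ref{difM} that every even derivation $d$ is upper triangular in the ordered basis $\{e_1, e_2, e_3, \dots, e_n, y_1, y_2, \dots, y_n\}$, with diagonal entries $(2a_1, 2a_1, 4a_1, 6a_1, \dots, 2(n-1)a_1, a_1, 3a_1, \dots, (2n-1)a_1)$. Therefore the only way $d$ can fail to be nilpotent is to have $a_1 \neq 0$: if $a_1 = 0$ then all diagonal entries vanish and $d$ is strictly upper triangular, hence nilpotent.

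Combining these two observations, there must be some even derivation of $M$ with $a_1 \neq 0$. Plugging this into the constraints
\[
\theta a_1 = 0, \qquad \tau a_1 = 0, \qquad \alpha_i a_1 = 0 \quad (4 \leq i \leq n)
\]
supplied by Proposition \ref{difM} forces $\theta = 0$, $\tau = 0$ and $\alpha_4 = \alpha_5 = \dots = \alpha_n = 0$, as desired.

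There is essentially no obstacle: the whole content is already packed into Proposition \ref{difM}, and the structural step (passing from non-nilpotency of $L$ to existence of a non-nilpotent even derivation of the nilradical) is the standard ingredient from \cite{CNO2} that was used identically in Corollary \ref{Cor1}. The only point worth being careful about is checking that $a_1 = 0$ really does imply nilpotency of $d$; this is immediate from the explicit form in Proposition \ref{difM} because the off-diagonal terms all raise the subscript of the basis vector on which they act, so the matrix of $d$ is strictly upper triangular in the basis ordering above.
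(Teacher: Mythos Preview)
Your proof is correct and follows exactly the route the paper intends: the paper does not spell out a proof of Corollary~\ref{Cor2} at all but simply says it follows ``analogously to Corollary~\ref{Cor1}'' from Proposition~\ref{difM}, and your argument is precisely that analogue, with the added (and welcome) care of explaining why $a_1=0$ forces $d$ to be strictly upper triangular and hence nilpotent.
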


The following theorem describes a solvable Leibniz superalgebra whose nilradical is $M(0, 0,\dots, 0, 0, 0)$.  It is proved by a similar reason as in Theorem \ref{SL}.
\begin{theorem}\label{SM} Let $L$  be a solvable Leibniz superalgebra with nilradical $M(0, 0,\dots, 0, 0, 0)$. Then $L$ isomorphic to the superalgebra
\[SM:\left\{\begin{array}{lll}
[e_1,e_1]=e_3,&  [e_i,e_1]=e_{i+1},&     2 \le i \le n-1,\\[1mm]
[y_j,e_1]=y_{j+1}, & &   1 \le j \le n-1,\\[1mm]
[e_1,y_1]=  \frac12 y_2,&[e_i,y_1]=  \frac12 y_i, &    2 \le i \le n,\\[1mm]
[y_1,y_1]=e_1,& [y_j,y_1]=e_{j+1},& 2 \le j \le n-1,\\[1mm]
[e_1,x]=2e_1,& [e_i,x]=2(i-1)e_i,& 2\leq i\leq n,\\[1mm]
[y_i,x]=(2i-1)y_i,& & 1\leq i\leq n,\\[1mm]
[x,e_1]=-2e_1,&[x,y_1]=-y_1.\\[1mm]
\end{array}\right.\]
\end{theorem}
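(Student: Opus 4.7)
The argument will parallel the proof of Theorem \ref{SL}. First, I will observe that by Proposition \ref{difM} with all parameters set to zero, an even derivation of the nilradical $N := M(0,\dots,0,0,0)$ is parameterized by $(a_1,a_2,\dots,a_{n-1},b_n)$, with diagonal action $\mathrm{diag}(2a_1,2a_1,4a_1,\dots,2(n-1)a_1\,|\,a_1,3a_1,\dots,(2n-1)a_1)$. Such a derivation is nilpotent exactly when $a_1 = 0$, so $N$ admits a unique nil-independent even derivation up to scaling. The dimension bound of \cite{CNO2} together with the non-nilpotency of $L$ then forces $\dim L = \dim N + 1 = 2n+1$, and we can write $L = N \oplus \langle x \rangle$ for some $x \in L_0 \setminus N$.

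Next, I will normalize $x$. Rescaling so that $R_x|_N$ corresponds to $a_1 = 1$, the remaining parameters $a_2,\dots,a_{n-1}, b_n$ represent nilpotent contributions to $R_x$. These can be eliminated by successive linear basis changes $e_1 \mapsto e_1 + \sum_{k=3}^n c_k e_k$ and $e_2 \mapsto e_2 + d_n e_n$ in $N$, cascaded to the remaining basis vectors via the defining relations. A direct calculation shows such shifts send $a_{k-2} \mapsto a_{k-2} - 2(k-2)c_k$ and $b_n \mapsto b_n - 2(n-1)d_n$, so with suitable choices of $c_k,d_n$ all nonessential parameters vanish. This yields the displayed right products $[e_1,x] = 2e_1$, $[e_i,x] = 2(i-1)e_i$ for $2 \le i \le n$, and $[y_i,x] = (2i-1)y_i$ for $1 \le i \le n$.

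Finally, I will determine the left products with $x$. Noting that $e_k$ for $k \ge 2$ and $y_j$ for $j \ge 2$ lie in $\operatorname{Ann}_r(N)$ but $e_1$ does not, the relation $[x,e_1] + [e_1,x] \in \operatorname{Ann}_r(L)$ pins the $e_1$-coefficient of $[x,e_1]$ to $-2$; any residual summand can be absorbed into a redefinition $x \mapsto x + z$ with $z \in N_0$ preserving the normalizations above. The same approach, together with the Leibniz identity $[x,[y_1,y_1]] = 2[[x,y_1],y_1]$, gives $[x,y_1] = -y_1$. The higher-index left products vanish by induction on $[x,e_i] = [[x,e_{i-1}],e_1] - [[x,e_1],e_{i-1}]$ for $i \ge 3$ and $[x,y_j] = [[x,y_{j-1}],e_1] - [[x,e_1],y_{j-1}]$ for $j \ge 2$. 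The middle case $[x,e_2] = 0$ follows by combining the annihilator condition $[x,e_2] + [e_2,x] \in \operatorname{Ann}_r(L)$ with Leibniz applied to $[x,[e_1,e_1]] = 0$ and to $[x,[y_1,e_2]] = 0$. Finally, $[x,x] \in \operatorname{Ann}_r(L)$ together with Leibniz on $[x,[x,y_1]]$ forces $[x,x] = 0$.

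The main obstacle will be the simultaneous bookkeeping in the normalization step: I must verify that the cascade of induced basis changes on $N$ eliminates $a_2,\dots,a_{n-1},b_n$ without reintroducing previously-killed parameters or disturbing the nilradical structure. Once this is handled, the final verification that the resulting table satisfies all Leibniz super-identities follows directly from the defining relations of $N$, in parallel with Theorem \ref{SL}.
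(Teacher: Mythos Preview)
Your outline is correct and follows the same route the paper indicates (it simply defers to the parallel argument for Theorem~\ref{SL}). One refinement for the step you flag as the main obstacle: in $M(0,\dots,0)$ the relation $[e_2,y_1]=\tfrac12 y_2$ (present because here $m=n$) forces $b_n=a_{n-1}$ once you impose the derivation condition on that product, so $b_n$ is not an independent parameter and your proposed separate shift $e_2\mapsto e_2+d_n e_n$ is unnecessary---indeed that shift would spoil $[e_2,y_1]=\tfrac12 y_2$. The single cascaded change generated from $y_1'=y_1+\sum_k A_k y_k$ (which also absorbs the parameter $a_n$ you omitted from your list) already diagonalizes $R_x$ on all of $N$, including on $e_2$; after that the remainder of your argument goes through as written.
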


Now consider solvable Leibniz superalgebras whose nilradicals belong to the class $H(\beta_4, \beta_5, \dots, \beta_n,\delta, \gamma)$. Then from Proposition~\ref{difH}, we have the following result.

\begin{corollary}\label{corH}
If $L$ is a non-nilpotent solvable Leibniz superalgebra with nilradical from the class $H(\beta_4, \beta_5, \dots, \beta_n,\delta, \gamma)$, then:
$$(\beta_4, \beta_5, \dots, \beta_n,\delta, \gamma)=
\left\{\begin{array}{lll}
(0,0,\dots,0,0,0),\\[1mm]
(0,0,\dots,0, \beta_{t},0,\dots,0,0), & 4\leq t\leq n, & \beta_{t}\neq 0,\\[1mm]
(0,0,\dots,0,\delta,0), & & \delta\neq 0, \\[1mm]
(0,0,\dots,0, \beta_{\frac{n+3}2},0,\dots,0,\gamma), &  n \ \text{is odd}, &  \gamma\neq 0.
\end{array}\right.$$
\end{corollary}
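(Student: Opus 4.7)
The plan is to combine the parametrization of even derivations from Proposition~\ref{difH} with the general principle (recalled in Section~2 from \cite{CNO2}) that any non-nilpotent solvable Leibniz superalgebra with nilradical $N$ admits a non-nilpotent even derivation of $N$, realised as the right multiplication by an even element outside $N$. Hence I can start from a non-nilpotent even derivation $d$ of the nilradical $H(\beta_4,\dots,\beta_n,\delta,\gamma)$ in the form given by Proposition~\ref{difH} and use the scalar relations~\eqref{eq4.1} to pin down the parameter tuple.

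Reading off the formulas, $d$ is upper-triangular in the basis $\{e_1,\dots,e_n,y_1,\dots,y_n\}$ with the only nonzero diagonal entries being $2a_1$ on $e_1$, $2(i-1)a_1$ on $e_i$ for $i\ge 3$, $(2i-1)a_1$ on $y_i$, and $b_2$ on $e_2$. Thus $d$ is nilpotent precisely when $a_1 = b_2 = 0$, so I split on whether $a_1$ vanishes. If $a_1 = 0$ and $b_2 \ne 0$, then \eqref{eq4.1} collapses to $-b_2\beta_i = -b_2\delta = -b_2\gamma = 0$, forcing every parameter to vanish and producing the first tuple.

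Assume now $a_1 \ne 0$. If two distinct indices $i \ne j$ had $\beta_i, \beta_j \ne 0$, then \eqref{eq4.1} would give $b_2 = 2(i-2)a_1 = 2(j-2)a_1$, a contradiction; so at most one $\beta_t$ can be nonzero, in which case $b_2 = 2(t-2)a_1$. The same reasoning shows $\delta \ne 0$ forces $b_2 = 2(n-1)a_1$ and $\gamma \ne 0$ forces $b_2 = (n-1)a_1$, so $\delta$ and $\gamma$ cannot simultaneously be nonzero (they would give $(n-1)a_1 = 0$). Walking through the remaining sign patterns of $(\beta_t, \delta, \gamma)$ yields the other three families: with only some $\beta_t$ nonzero no further restriction arises and the second family appears; if only $\delta$ is nonzero, the compatibility $2(t-2) = 2(n-1)$ has no admissible $t \le n$, so all $\beta_i$ vanish and the third family results; if $\gamma$ is nonzero, the compatibility $2(t-2)a_1 = (n-1)a_1$ forces $t = \frac{n+3}{2}$, which is an integer only when $n$ is odd, producing the fourth family. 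The only subtle point, and therefore the main obstacle, is this arithmetic/parity bookkeeping in the last case; everything else is an immediate substitution into \eqref{eq4.1}.
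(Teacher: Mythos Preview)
Your proof is correct and follows essentially the same approach as the paper: both arguments reduce the question to the existence of a non-nilpotent even derivation, read off from Proposition~\ref{difH} that this amounts to $(a_1,b_2)\neq(0,0)$, and then run the same case analysis on the constraints~\eqref{eq4.1}. Your organization (splitting first on $a_1=0$ versus $a_1\neq 0$ and phrasing the remaining constraints as forcing a single value of $b_2/a_1$) is slightly more streamlined than the paper's bullet-by-bullet enumeration, but the content is identical.
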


\begin{proof}
By the conditions on the parameters of the $H(\beta_4, \beta_5, \dots, \beta_n,\delta, \gamma)$ from Proposition~\ref{difH}, we have the following cases:

\begin{itemize}
  \item If all parameters are equal to zero, we obtain a split superalgebra $H(0,0,\dots,0,0,0)$, which has non-nilpotent even derivation.
  \item If $\beta_i\neq 0, \beta_j\neq 0$ for some $i, j (4\leq i\neq j\leq n),$
  then from \eqref{eq4.1}, we have  $(a_1,b_2)=(0,0),$ which implies that all even derivations of the superalgebra are nilpotent.
    Therefore, in this case, there is no solvable Leibniz superalgebra with nilradical $H(\beta_4, \beta_5, \dots, \beta_n,\delta, \gamma)$.
  \item If $\beta_t\neq 0$ for some $t$ and $\beta_i = 0$ for $i\neq t,$ then $b_2=2(t-2)a_1$ and
  $$\delta(2(n-1)a_1-b_2)=0, \quad  \gamma((n-1)a_1-b_2)=0.$$
  From these equalities we have $\delta a_1(n+1-t)=0, \ \gamma a_1(n-2t +3)=0.$
 If $\delta \neq 0,$ then $a_1=0$ and the Leibniz superalgebra has only nilpotent even derivations. Thus $\delta = 0$ and
 \begin{itemize} \item if $\gamma = 0,$ then we have the superalgebras $H(0,0,\dots,0, \beta_{t},0,\dots,0,0),$ $4\leq t\leq n,$ $\beta_{t}\neq 0;$
 \item if $\gamma \neq 0,$ then in case of $t \neq \frac{n+3}2,$ we have that $a_1=0$ and the Leibniz superalgebra has only nilpotent even derivations which is contradiction with non-nilpotency of the Leibniz superalgebra $L.$ In case of $t = \frac{n+3}2$ we have the superalgebras $H(0,0,\dots,0, \beta_{\frac{n+3}2},0,\dots,0,\gamma).$
     Note that the case $t = \frac{n+3}2$ appears only for $n$ is odd.

 \end{itemize}
  \item If $\beta_i= 0$ for all $i (4\leq i\leq n)$ and $\delta\neq 0$, then $\gamma=0$ and we have the superalgebra $H(0,0, 0, \dots, 0, \delta, 0)$.
  \item If $\beta_i= 0$ for all $i (4\leq i\leq n),$ $\delta = 0$ and $\gamma\neq 0$, then we have the superalgebra $H(0,0, 0, \dots, 0, 0, \gamma).$
\end{itemize}
\end{proof}


Similarly, for the class of superalgebras $G(\beta_4, \beta_5, \dots, \beta_n, \gamma)$, we have

\begin{corollary} \label{corG}
If $L$ is a non-nilpotent solvable Leibniz superalgebra with nilradical from the class $G(\beta_4, \beta_5, \dots, \beta_n, \gamma)$, then:
$$(\beta_4, \beta_5, \dots, \beta_n, \gamma)=
\left\{\begin{array}{llll}
(0,0,\dots,0,0),\\[1mm]
(0,0,\dots,0, \beta_{t},0,\dots,0), & 4\leq t\leq n,\\[1mm]
(0,0,\dots,0, \beta_{\frac{n+3}2},0,\dots,0,\gamma), & n \ \text{is odd}, & \gamma\neq 0.
\end{array}\right.$$
\end{corollary}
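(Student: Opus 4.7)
The plan is to mimic Corollary \ref{corH}, specializing to the setting of Proposition \ref{difG} where the $\delta$-parameter is absent. An even derivation of $G(\beta_4,\dots,\beta_n,\gamma)$ described there is nilpotent exactly when its ``diagonal'' scalars $a_1$ and $b_2$ both vanish, so the existence of a non-nilpotent solvable extension is equivalent to the feasibility of the system
\[
\gamma\bigl((n-1)a_1-b_2\bigr)=0, \qquad \beta_i\bigl(2(i-2)a_1-b_2\bigr)=0, \quad 4\leq i\leq n,
\]
together with $(a_1,b_2)\neq(0,0)$. I would then read off the admissible tuples $(\beta_4,\dots,\beta_n,\gamma)$ by a short case analysis on the number of nonzero $\beta_i$.

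First, suppose two distinct indices $i\neq j$ have $\beta_i,\beta_j\neq 0$. The corresponding equations give $2(i-2)a_1=b_2=2(j-2)a_1$, forcing $a_1=0$ and hence $b_2=0$, which contradicts non-nilpotency. Hence at most one $\beta_t$ may be nonzero. When exactly one $\beta_t\neq 0$, the relation $b_2=2(t-2)a_1$ substituted into the $\gamma$-equation yields $\gamma a_1(n-2t+3)=0$; this either forces $\gamma=0$ (giving the second tuple in the list) or demands $t=\frac{n+3}{2}$, an integer only for odd $n$ (giving the third tuple). Finally, when every $\beta_i=0$ the $\gamma$-equation alone remains, and a non-nilpotent derivation always exists, recovering the trivial first tuple.

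I do not anticipate a serious obstacle: the system is linear in $a_1,b_2$ and the argument amounts to bookkeeping of which combinations of vanishing parameters admit a nontrivial $(a_1,b_2)$. In each surviving case it still has to be checked that a genuinely non-nilpotent derivation can be chosen, but this is immediate from Proposition \ref{difG} by taking $a_1=1$ and setting $b_2$ as prescribed by the single linear relation that survives. The only subtle point, already handled in the proof of Corollary \ref{corH}, is the arithmetic condition that $\frac{n+3}{2}$ be an integer, which cleanly accounts for the parity restriction on $n$ in the last item.
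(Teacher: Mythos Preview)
Your approach is correct and is exactly the one the paper intends: it explicitly says the corollary is proved ``similarly'' to Corollary~\ref{corH}, and your case analysis on the system from Proposition~\ref{difG} mirrors that argument point by point. One minor imprecision: in your final case (all $\beta_i=0$) you say this ``recovers the trivial first tuple'', but when $\gamma\neq 0$ it actually lands in the third tuple with $\beta_{(n+3)/2}=0$---the paper's proof of Corollary~\ref{corH} treats this subcase separately, and you should do the same here.
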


Now using Corollary \ref{corH}, we classify solvable Leibniz superalgebras with nilradical $H(\beta_4, \beta_5, \dots, \beta_n,\delta, \gamma)$.

First we consider the case when the nilradical of solvable Leibniz superalgebra is the superalgebra $H(0, 0, \dots, 0,0).$
From Proposition \ref{difH}, it is easy to conclude that there are two nil-independent even derivations of the superalgebra $H(0, 0, \dots, 0,0)$
and other algebras have only one nil-independent even derivations.
Moreover, a superalgebra from the class $H(\beta_4, \beta_5, \dots, \beta_n,\delta, \gamma)$ is split if and only if all parameters are equal to zero, i.e., superalgebra isomorphic to
$H(0, 0, \dots, 0,0).$

\begin{theorem} \label{solvH(0)} Let $L=L_0\oplus L_1$ be a solvable Leibniz superalgebra whose nilradical is isomorphic to the superalgebra $H(0, 0, \dots, 0,0)$.
Then $L$ is isomorphic to the following pairwise non-isomorphic superalgebras:
$$MH_1:\left\{\begin{array}{lll}
[e_1,e_1]=e_3, & [e_i,e_1]=e_{i+1}, & 3\leq i\leq n-1,\\[1mm]
[y_j,e_1]=y_{j+1}, & & 1\leq j\leq n-1,\\[1mm]
[y_1,y_1]=e_1, &
[y_j,y_1]=e_{j+1}, &2\leq j\leq n-1,\\[1mm]
[e_1,y_1]=\frac{1}{2}y_2,&
[e_i,y_1]=\frac{1}{2}y_{i}, & 3\leq i\leq n,\\[1mm]
[y_i,x_1]=(2i-1)y_i, & & 1\leq i\leq n,\\[1mm]
[e_1,x_1]=2e_1,&
[e_i,x_1]=2(i-1)e_i, & 3\leq i\leq n,\\[1mm]
[x_1,e_1]=-2e_1, & [x_1,y_1]=-y_1, &
[e_2,x_2]=e_2,
\end{array}\right.
$$
$$
MH_2:\left\{\begin{array}{llll}
[e_1,e_1]=e_3, & [e_i,e_1]=e_{i+1}, & 3\leq i\leq n-1,\\[1mm]
[y_j,e_1]=y_{j+1}, & &1\leq j\leq n-1,\\[1mm]
[y_1,y_1]=e_1, &
[y_j,y_1]=e_{j+1}, &2\leq j\leq n-1,\\[1mm]
[e_1,y_1]=\frac{1}{2}y_2, &
[e_i,y_1]=\frac{1}{2}y_{i}, & 3\leq i\leq n,\\[1mm]
[y_i,x_1]=(2i-1)y_i, & & 1\leq i\leq n,\\[1mm]
[e_1,x_1]=2e_1, & [e_i,x_1]=2(i-1)e_i, & 3\leq i\leq n,\\[1mm]
[x_1,e_1]=-2e_1, &
[x_1,y_1]=-y_1, &
[e_2,x_2]=e_2, &
[x_2,e_2]=-e_2,
\end{array}\right.$$
$$H_1(b):\left\{\begin{array}{lllll}
[e_1,e_1]=e_3, &
[e_i,e_1]=e_{i+1}, & 3\leq i\leq n-1,\\[1mm]
[y_j,e_1]=y_{j+1}, && 1\leq j\leq n-1,\\[1mm]
[y_1,y_1]=e_1,& [y_j,y_1]=e_{j+1}, &2\leq j\leq n-1,\\[1mm]
[e_1,y_1]=\frac{1}{2}y_2, &
[e_i,y_1]=\frac{1}{2}y_{i}, & 3\leq i\leq n,\\[1mm]
[y_i,x]=(2i-1)y_i,& & 1\leq i\leq n,\\[1mm]
[e_1,x]=2e_1, &  [e_2,x]=be_2, & [e_i,x]=2(i-1)e_i,& 3\leq i\leq n, \\[1mm]
[x,e_1]=-2e_1, & [x,e_2]=-be_2, & [x,y_1]=-y_1, & b \neq0,
\end{array}\right.$$

$$H_2(b):\left\{\begin{array}{lllll}
[e_1,e_1]=e_3, & [e_i,e_1]=e_{i+1}, & 3\leq i\leq n-1,\\[1mm]
[y_j,e_1]=y_{j+1}, & & 1\leq j\leq n-1,\\[1mm]
[y_1,y_1]=e_1, & [y_j,y_1]=e_{j+1}, &2\leq j\leq n-1,\\[1mm]
[e_1,y_1]=\frac{1}{2}y_2, & [e_i,y_1]=\frac{1}{2}y_{i}, & 3\leq i\leq n,\\[1mm]
[y_i,x]=(2i-1)y_i,& & 1\leq i\leq n,\\[1mm]
[e_1,x]=2e_1,& [e_2,x]=be_2, & [e_i,x]=2(i-1)e_i, & 3\leq i\leq n,\\[1mm]
[x,e_1]=-2e_1, &  [x, y_1]=-y_1,
\end{array}\right.
$$

$$H_3:\left\{\begin{array}{lll}
[e_1,e_1]=e_3,& [e_i,e_1]=e_{i+1}, & 3\leq i\leq n-1,\\[1mm]
[y_j,e_1]=y_{j+1}, & &1\leq j\leq n-1,\\[1mm]
[y_1,y_1]=e_1, & [y_j,y_1]=e_{j+1}, &2\leq j\leq n-1,\\[1mm]
[e_1,y_1]=\frac{1}{2}y_2,& [e_i,y_1]=\frac{1}{2}y_{i}, & 3\leq i\leq n,\\[1mm]
[y_i,x]=(2i-1)y_i,& 1\leq i\leq n,\\[1mm]
[e_1,x]=2e_1,& [e_i,x]=2(i-1)e_i,& 3\leq i\leq n,\\[1mm]
[x,e_1]=-2e_1, & [x,y_1]=-y_1, & [x,x]=e_2,\\[1mm]
\end{array}\right.$$
$$H_4:\left\{\begin{array}{lllll}
[e_1,e_1]=e_3, & [e_i,e_1]=e_{i+1}, & 3\leq i\leq n-1,\\[1mm]
[y_j,e_1]=y_{j+1}, & & 1\leq j\leq n-1,\\[1mm]
[y_1,y_1]=e_1, &
[y_j,y_1]=e_{j+1}, &2\leq j\leq n-1,\\[1mm]
[e_1,y_1]=\frac{1}{2}y_2,&
[e_i,y_1]=\frac{1}{2}y_{i}, & 3\leq i\leq n,\\[1mm]
[e_1,x]=\sum\limits_{k=3}^{n}a_{k-1}e_k,&
[e_2,x]=e_2,&
[e_i,x]=\sum\limits_{k=i+1}^{n} a_{k+1-i}e_{k}, & 3\leq i\leq n,\\[1mm]
[y_i,x]=\sum\limits_{k=i+1}^{n} a_{k+1-i}y_{k}, & 1\leq i\leq n-1,
\end{array}\right.$$
$$H_5(\gamma):\left\{\begin{array}{lllll}
[e_1,e_1]=e_3, & [e_i,e_1]=e_{i+1}, & 3\leq i\leq n-1,\\[1mm]
[y_j,e_1]=y_{j+1}, & & 1\leq j\leq n-1,\\[1mm]
[y_1,y_1]=e_1, &
[y_j,y_1]=e_{j+1}, &2\leq j\leq n-1,\\[1mm]
[e_1,y_1]=\frac{1}{2}y_2,&
[e_i,y_1]=\frac{1}{2}y_{i}, & 3\leq i\leq n,\\[1mm]
[e_1,x]=\sum\limits_{k=3}^{n}a_{k-1}e_k,&
[e_2,x]=e_2,&
[e_i,x]=\sum\limits_{k=i+1}^{n} a_{k+1-i}e_{k}, & 3\leq i\leq n,\\[1mm]
[y_i,x]=\sum\limits_{k=i+1}^{n} a_{k+1-i}y_{k}, & 1\leq i\leq n,\\[1mm]
[x,e_2]=-e_2,&
[x,x]=\gamma e_2,& \gamma \in \{0; 1\}.\\[1mm]
\end{array}\right.$$
Note that, the first non-vanishing parameter $\{a_2, a_3,\dots,a_{n}\}$
in the algebras  $H_4$ and $H_5(\gamma)$ can be scaled to $1$.

\end{theorem}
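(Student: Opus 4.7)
The plan is to apply the solvable extension method from \cite{CNO2}: any solvable Leibniz superalgebra $L$ with nilradical $N=H(0,\dots,0)$ decomposes as $L=N\oplus Q$, where the right multiplications $\{R_x : x\in Q\}$ form a space of nil-independent even derivations of $N$. From Proposition~\ref{difH}, every even derivation of $H(0,\dots,0)$ is determined by parameters $(a_1,a_2,\dots,a_{n-1},b_2)$, and nil-independence is controlled by the two diagonal parameters $a_1$ and $b_2$, while the remaining $a_k$ yield nilpotent derivations. Hence $\dim(L/N)\leq 2$, and the proof splits into the cases $\dim Q=2$ and $\dim Q=1$.

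For $\dim Q=2$ we choose a basis $\{x_1,x_2\}$ of $Q$ so that $R_{x_1}$ corresponds to $(a_1,b_2)=(1,0)$ and $R_{x_2}$ to $(a_1,b_2)=(0,1)$. The remaining nilpotent perturbations $a_k$ are absorbed by shifts $x_i\mapsto x_i+\sum c_k e_k$, since such a shift modifies $R_{x_i}$ by an inner derivation of $N$. The left brackets $[x_i,n]$ for $n\in N$ are then determined by applying the Leibniz superidentity to triples involving the generators $e_1$ and $y_1$, together with the standard relation $[x_i,n]+[n,x_i]\in\operatorname{Ann}_r(L)$. The only genuine freedom is whether $[x_2,e_2]=0$ or $[x_2,e_2]=-e_2$, yielding the two non-isomorphic superalgebras $MH_1$ and $MH_2$. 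Finally, $[x_i,x_j]\in\operatorname{Ann}_r(L)\cap N$, and testing the Leibniz identity on suitably chosen triples forces these brackets to vanish.

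For $\dim Q=1$, writing $Q=\langle x\rangle$, we distinguish according to whether $a_1\ne 0$ or $a_1=0$. If $a_1\ne 0$, we scale $x$ so $a_1=1$ and absorb the nilpotent parameters $a_k$ via shifts, leaving $b:=b_2$ as a free parameter; analyzing $[x,e_2]$ and $[x,x]$ (both constrained to lie in $\operatorname{Ann}_r(L)$) yields $H_1(b)$ and $H_2(b)$ when $b\ne 0$ (distinguished by whether the action on $e_2$ is antisymmetric) and $H_3$ when $b=0$ and $[x,x]=e_2$. If $a_1=0$ (so $b_2\ne 0$, normalized to $1$), the nilpotent parameters $a_2,\dots,a_{n-1}$ can no longer be removed and survive in the multiplication table; the analysis of $[x,e_2]$ and $[x,x]$ then produces the families $H_4$ and $H_5(\gamma)$, with rescaling of $x$ used to set the first nonzero $a_k$ to $1$ and to reduce $\gamma$ to $\{0,1\}$.

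The main technical obstacle is the $a_1=0$ branch of the $\dim Q=1$ case: because the nilpotent parameters survive as genuine moduli, one must verify the Leibniz superidentity on every triple of basis vectors involving $e_1$, $e_2$ and the $y_j$ without the simplification of a purely diagonal action, and check that no further relations arise beyond those listed. The non-isomorphism of all resulting families is then verified by comparing invariants such as the spectrum of $R_x$ on $e_1$ and $e_2$, the rank of $[x,x]$, and the symmetry defect $[x,e_2]+[e_2,x]\in\operatorname{Ann}_r(L)$.
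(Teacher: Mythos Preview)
Your overall plan matches the paper's: bound $\dim(L/N)$ by the number of nil-independent even derivations, then treat codimension $2$ and codimension $1$ separately, splitting the latter by whether $a_1\ne0$. The case division and the resulting list are correct.

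There is, however, a genuine gap in your mechanism for eliminating the nilpotent parameters $a_2,\dots,a_{n-1}$. You claim these are absorbed by shifts $x\mapsto x+\sum_k c_k e_k$, which alter $R_x$ by an inner derivation of $N$. But in $H(0,\dots,0)$ the elements $e_2,e_3,\dots,e_n$ all lie in $\operatorname{Ann}_r(N)$ (all the $\beta_i,\delta,\gamma$ vanish), so $R_{e_k}=0$ for $k\ge 2$; the space of inner even derivations is therefore one-dimensional, spanned by $R_{e_1}$ alone. A shift of $x$ can thus change only the single parameter $a_2$, and the remaining $a_3,\dots,a_{n-1}$ cannot be removed this way.

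The paper handles this by changing the basis of $N$ rather than shifting $x$: one replaces $y_i$ by $y_i+A_2y_{i+1}+\dots+A_{n-i+1}y_n$ (and the $e_i$ analogously), choosing the $A_k$ recursively via
\[
A_k=-\frac{a_k+A_2 a_{k-1}+\dots+A_{k-1}a_2}{2(k-1)},\qquad 2\le k\le n,
\]
which conjugates $R_x$ by an automorphism of $N$ and kills all the $a_k$ simultaneously when $a_1\ne 0$. The denominators $2(k-1)$ come from the diagonal weights $2(i-1)a_1$, and this is precisely why the step fails---and the $a_k$ survive as moduli---in your $a_1=0$ branch. With this correction in place, the rest of your argument (determining $[x,e_2]$, $[x,x]$ and $[x_i,x_j]$ from Leibniz superidentities and right-annihilator constraints) proceeds as in the paper.
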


\begin{proof}
From Proposition \ref{difH}, it is not difficult to see that the maximal number of nil-independent even derivations of the superalgebra
$N=H(0, 0, \dots, 0,0)$ is equal to $2.$ Thus, for the dimension of the solvable Leibniz superalgebras with nilradical $N,$ we have
$$\dim L - \dim N \leq 2. $$

\textbf{Case} $\dim L - \dim N = 2.$
Since the codimension of the nilradical $N$ is equal to $2$, we can choose a basis $\{e_1, e_2, \dots, e_n, x_1, x_2, y_1, y_2, \dots, y_m\}$ of $L$ such that $R_{x_1}$ and $R_{x_2}$ are nil-independent even derivations of $N.$
Then using Proposition \ref{difH}, we have that

$$\left\{\begin{array}{lll}
[e_1,x_1]=2e_1+a_2e_3+a_3e_4+\dots+a_{n-1}e_n,\\[1mm]
[e_i,x_1]=2(i-1)e_i+a_2e_{i+1}+a_{3}e_{i+2}+\dots+a_{n-i+1}e_n, & 3\leq i\leq n,\\[1mm]
[y_i,x_1]=(2i-1)y_i+a_2y_{i+1}+a_{3}y_{i+2}+\dots+a_{n-i+1}y_{n}, & 1\leq i\leq n,\\[1mm]
[e_1,x_2]=\alpha_2e_3+\alpha_3e_4+\dots+\alpha_{n-1}e_n,\\[1mm]
[e_2,x_2]=e_2,\\[1mm]
[e_i,x_2]=\alpha_2e_{i+1}+\alpha_{3}e_{i+2}+\dots+\alpha_{n-i+1}e_n, & 3\leq i\leq n-1,\\[1mm]
[y_i,x_2]=\alpha_2y_{i+1}+\alpha_{3}y_{i+2}+\dots+\alpha_{n-i+1}y_{n}, & 1\leq i\leq n-1.\\[1mm]
\end{array}\right.$$

Taking the following change of basis
$$\begin{cases}
y_i'=y_i+A_2y_{i+1}+A_3y_{i+2}+\dots+A_{n-i+1}y_n, & 1\leq i\leq n,\\[1mm]
e_1'=e_1+A_2e_2+A_3e_3+\dots+A_{n-1}e_n,\\[1mm]
e_2'=e_2,\\[1mm]
e_i'=e_i+A_2e_{i+1}+A_3e_{i+2}+\dots+A_{n-i+1}e_n,& 3\leq i\leq n,\\[1mm]
\end{cases}$$
 where
$$A_k=-\frac{a_k+A_2a_{k-1}+A_3a_{k-2}+\dots+A_{k-1}a_{2}}{2(k-1)}, \ 2\leq k\leq n,$$
we can assume $a_i=0, \ 2\leq i\leq n$.

From the multiplication of the nilradical and the properties of the right annihilator, we easily get $e_i, y_j \in \operatorname{Ann}_r(L)$ for $3\leq i\leq n$ and $2\leq j\leq n,$ i.e.,
$$[x_1,e_i]=[x_2,e_i]=0, \ 3\leq i\leq n, \qquad [x_1,y_j]=[x_2,y_j]=0, \ 2\leq j\leq n.$$

Put
 $$\begin{cases}
[x_1,y_1]=m_1y_1+m_2y_2+\dots+m_ny_n,\\[1mm]
[x_1,e_2]=p_1e_1+p_2e_2+\dots+p_ne_n,\\[1mm]
[x_2,y_1]=\gamma_1y_1+\gamma_2y_2+\dots+\gamma_ny_n,\\[1mm]
[x_2,e_2]=\delta_1e_1+\delta_2e_2+\dots+\delta_ne_n,\\[1mm]
[x_i,x_j]=c_{ij}^1e_1+c_{ij}^2e_2+\dots+c_{ij}^ne_n, & 1\leq i, j\leq 2.\\[1mm]
\end{cases}$$

Making the change $$x_1'=x_1-2m_2e_1-2\sum\limits_{k=3}^{n}m_ke_k, \quad x_2'=x_2-2\gamma_2e_1-2 \sum\limits_{k=3}^{n}\gamma_ke_k,$$ we can assume
$m_i=\gamma_i=0$ for $2\leq i \leq n$.

Considering the Leibniz superidentity for $\{x_1, y_1, y_1\}$ and $\{x_2, y_1, y_1\}$,  we derive
$$
[x_1, e_1]=2m_1e_1, \quad [x_2,e_1]=2\gamma_1e_1.$$
Similarly, if we apply the Leibniz superidentity on the triples
$\{x_1, e_2, e_1\}$, $\{x_2, e_2, e_1\}$, $\{e_1, x_1, e_1\}$, \ $\{e_1, x_2, e_1\}$, \ $\{x_1, x_1, e_1\}$, \ $\{x_2, x_2, e_1\}$, \ 
$\{x_1,y_1, x_1\}$, \ $\{x_2,y_1, x_2\}$, \ $\{e_1,x_1,x_2\}$, $\{e_1,x_2,x_1\}$, $\{x_1,x_2,e_1\}$, $\{x_2,x_1,e_1\}$, $\{x_1,x_2,y_1\}$ and $\{x_2, x_1, y_1\},$ we obtain
$$\begin{array}{llllll}
p_1=0, & p_i=0, & 3\leq i\leq n-1, \\[1mm]
\delta_1=0, & \delta_i=0, & 3\leq i\leq n-1, \\[1mm]
m_1=-1,&\gamma_1=0, & \alpha_i=0, &2\leq i\leq n, \\[1mm]
c_{ii}^1=0, & c_{ii}^k=0, & 1 \leq i \leq 2, & 3\leq k\leq n, \\[1mm]
c_{12}^1=0, &c_{12}^i=0, & 3\leq i\leq n,\\[1mm]
c_{21}^1=0, & c_{21}^i=0, & 3\leq i\leq n.  \\[1mm]
\end{array}$$

Moreover, the Leibniz superidentity for the triples $\{x_2,e_2,x_1\}$, $\{x_1, e_2,x_2\}$,
$\{x_2,x_1, e_2\}$, $\{x_1, x_2, e_2\}$ and $\{x_2, x_2, e_2\}$
gives us
$$\begin{array}{lllllllll}
\delta_n=0,&  p_n=0, &  p_2=0, & \delta_2(\delta_2+1)=0.
\end{array}$$

Changing the basis $x_1'=x_1-c_{12}^2e_2$
allows us to assume that $c_{12}^2=0$.

Now using the Leibniz superidentities for $\{x_1,x_1,x_2\},$
$\{x_2, x_1, x_2\}$, $\{x_2, x_2, x_2\},$
we have $c_{11}^2=0,$ $c_{21}^2=0,$ $\delta_2c_{22}^2=0.$

\begin{itemize}
\item If $\delta_2=0$, then changing $x_2'=x_2-c_{22}^2  e_2,$ we can assume $c_{22}^2=0$ and obtain the superalgebra $MH_1.$

\item If $\delta_2=-1$, then $c_{22}^2=0$ and obtain the superalgebra $MH_2.$
\end{itemize}

\textbf{Case} $\dim L - \dim N = 1.$
Since, the operator of right multiplication $R_{x}$ is a derivation of $H(0, 0, \dots, 0),$ then
using Proposition \ref{difH}, we can assume that

$$\left\{\begin{array}{lll}
[e_1,x]=2a_1e_1+a_2e_3+a_3e_4+\dots+a_{n-1}e_n,\\[1mm]
[e_2,x]=b_2e_2,\\[1mm]
[e_i,x]=2(i-1)a_1e_i+a_2e_{i+1}+a_{3}e_{i+2}+\dots+a_{n-i+1}e_n, & 3\leq i\leq n,\\[1mm]
[y_i,x]=(2i-1)a_1y_i+a_2y_{i+1}+a_{3}y_{i+2}+\dots+a_{n-i+1}y_{n}, & 1\leq i\leq n.\\[1mm]
\end{array}\right.$$

Since $(a_1,b_2)\neq (0,0)$, we divide this case into two subcases:

\textbf{Subcase 1.} Let $a_1\neq 0,$ then we may suppose $a_1 =1.$
Taking the change of basis
$$\begin{cases}
y_i'=y_i+A_2y_{i+1}+A_3y_{i+2}+\dots+A_{n-i+1}y_n, & 1\leq i\leq n,\\[1mm]
e_1'=e_1+A_2e_2+A_3e_3+\dots+A_{n-1}e_n,\\[1mm]
e_2'=e_2,\\[1mm]
e_i'=e_i+A_2e_{i+1}+A_3e_{i+2}+\dots+A_{n-i+1}e_n,& 3\leq i\leq n,\\[1mm]
\end{cases}$$
 where
$$A_k=-\frac{a_k+A_2a_{k-1}+A_3a_{k-2}+\dots+A_{k-1}a_{2}}{2(k-1)}, \ 2\leq k\leq n,$$
 we can assume $a_i=0, \ 2\leq i\leq n.$

Since $e_i, y_j \in \operatorname{Ann}_r(L)$ for $3\leq i\leq n$ and $2\leq j\leq n,$  we conclude
$$[x,e_i]=0, \ 3\leq i\leq n, \quad [x,y_j]=0, \ 2\leq j\leq n.$$

Put
 $$\begin{cases}
[x,y_1]=\alpha_1y_1+\alpha_2y_2+\dots+\alpha_ny_n,\\[1mm]
[x,e_2]=\mu_1e_1+\mu_2e_2+\dots+\mu_ne_n,\\[1mm]
[x,x]=\gamma_1e_1+\gamma_2e_2+\dots+\gamma_ne_n.\\[1mm]
\end{cases}$$

Using Leibniz superidentity we get:
$$[x,e_1]=[x,[y_1,y_1]]=2[[x,y_1], y_1]=2\alpha_1e_1+2\alpha_2e_3+\dots+2\alpha_{n-1}e_{n}.$$
Taking the change $x'=x-2(\alpha_3e_3+\dots+\alpha_{n}e_{n}),$ one can assume $\alpha_i=0$ for $3\leq i\leq n.$
From $0=[y_1,[x,x]]$ and $[x,[y_1,x]]=[[x,y_1],x]-[[x,x],y_1],$
we obtain
$$\gamma_1=0, \quad  \alpha_2=0, \quad \gamma_i=0, \quad 3\leq i\leq n,$$

Since $[e_1,x]+[x,e_1] = 2(\alpha_1+1)e_1\in \operatorname{Ann}_r(L)$, we get $\alpha_1=-1.$

Considering the Leibniz superidentity for $\{x,e_2,e_1\}$, $\{x,e_2,y_1\}$, $\{x,x,e_2\}$, $\{x,x,x\}$, we derive the following restrictions:
$$\begin{array}{llll}
\mu_1=0, & \mu_i=0, & 3\leq i\leq n, \\[1mm]
 \mu_2(\mu_2+b_2)=0, & \gamma_2\mu_2=0.
\end{array}$$

\begin{itemize}
\item Let $\mu_2\neq 0$, then $\gamma_2=0,$ $b_2=-\mu_2,$ and obtain the superalgebra $H_1(b).$

\item Let $\mu_2= 0$,
\begin{itemize}
\item If $b_2\neq 0$, then taking $x'=x- \frac{\gamma_2}{b_2}e_2,$  we can suppose $\gamma_2 = 0,$ and obtain the superalgebra $H_2(b)$ for $b\neq 0.$

\item If $b_2= 0$, then in case of $\gamma_2=0,$ we have the superalgebra $H_2(b)$ for $b = 0,$
in case of $\gamma_2\neq 0$ making the change $e_2' = \gamma_2 e_2$, we obtain the superalgebra $H_3$.
\end{itemize}
\end{itemize}

\textbf{Subcase 2.} $a_1=0$, then $b_2\neq 0$ and we may suppose $b_2= 1.$ Then
$$\left\{\begin{array}{lll}
[e_1,x]=a_2e_3+a_3e_4+\dots+a_{n-1}e_n,\\[1mm]
[e_2,x]=e_2,\\[1mm]
[e_i,x]=a_2e_{i+1}+a_{3}e_{i+2}+\dots+a_{n-i+1}e_n, & 3\leq i\leq n,\\[1mm]
[y_i,x]=a_2y_{i+1}+a_{3}y_{i+2}+\dots+a_{n-i+1}y_{n}, & 1\leq i\leq n.\\[1mm]
\end{array}\right.$$

Put
 $$\begin{cases}
[x,y_1]=\alpha_1y_1+\alpha_2y_2+\dots+\alpha_ny_n,\\[1mm]
[x,e_2]=\mu_1e_1+\mu_2e_2+\dots+\mu_ne_n,\\[1mm]
[x,x]=\gamma_1e_1+\gamma_2e_2+\dots+\gamma_ne_n.\\[1mm]
\end{cases}$$

Using Leibniz superidentity for triples $\{x,y_1,y_1\}$, we get
$$[x,e_1]=2\alpha_1e_1+2\alpha_2e_3+\dots+2\alpha_{n-1}e_{n}.$$
By changing the basis $x'=x-2(\alpha_2e_1+\alpha_3e_3+\dots+\alpha_{n}e_{n})$, we can assume $\alpha_i=0$ for $2\leq i\leq n.$

Considering the Leibniz superidentity for elements  $\{y_1,x,x\}$, $\{x,e_2,e_1\}$, $\{x,e_2,y_1\}$, $\{e_1,x,y_1\}$, $\{x,x,e_2\}$, $\{x,y_1,x\}$, we obtain the following restrictions:
$$\begin{cases}
\gamma_1=0, \quad \mu_1=0, \quad \mu_i=0, \quad 3\leq i\leq n,\quad
\alpha_1=0,\\[1mm]
\mu_2(\mu_2+1)=0, \quad \gamma_i=0, \quad 3\leq i\leq n.
\end{cases}$$

\textbf{Subcase 2.1.} If $\mu_2=0$, then by $x'=x-\gamma_2e_2$, we may suppose that $\gamma_2=0$ and obtain the superalgebra $H_4.$

\textbf{Subcase 2.2.} If $\mu_2=-1$, then we have the superalgebra $H_5(\gamma).$
\end{proof}

Now we consider the case when the nilradical is a non-split superalgebra from the class $H(\beta_4, \beta_5, \dots, \beta_n,\delta, \gamma),$ i.e., at least one of the parameters is not equal to zero.

\begin{theorem} \label{solvH} Let $L=L_0\oplus L_1$ be a solvable Leibniz superalgebra whose nilradical is isomorphic to a non-split superalgebra $N$ from the class $H(\beta_4, \beta_5, \dots, \beta_n,\delta, \gamma)$. Then $L$ is isomorphic to one of the following pairwise non-isomorphic superalgebras:
$$\small SH_1({t})(4\leq t\leq n):\left\{\begin{array}{lll}
[e_1,e_1]=e_3,& [e_i,e_1]=e_{i+1}, & 3\leq i\leq n-1,\\[1mm]
[y_j,e_1]=y_{j+1},& &1\leq j\leq n-1,\\[1mm]
[y_1,y_1]=e_1,& [y_j,y_1]=e_{j+1}, &2\leq j\leq n-1,\\[1mm]
[e_1,y_1]=\frac{1}{2}y_2,& [e_i,y_1]=\frac{1}{2}y_{i}, & 3\leq i\leq n,\\[1mm]
[e_1,e_2]=e_{t},& [e_j,e_2]=e_{j+t-2},&3\leq j\leq n-2,\\[1mm]
[y_1,e_2]=y_{t-1},& [y_j,e_2]=y_{j+t-2},&2\leq j\leq n-2,\\[1mm]
[y_i,x]=(2i-1)y_i,& & 1\leq i\leq n,\\[1mm]
 [x,y_1]=-y_1,\\[1mm]
[e_1,x]=2e_1,& [x,e_1]=-2e_1, \\[1mm]
[e_2,x]=2(t-2)e_2,& [x,e_2]=-2(t-2)e_2-2e_{t-1},\\[1mm]
[e_i,x]=2(i-1)e_i,& & 3\leq i\leq n, \\[1mm]
\end{array}\right.$$
$$SH_2:\left\{\begin{array}{lll}
[e_1,e_1]=e_3,& [e_i,e_1]=e_{i+1}, & 3\leq i\leq n-1,\\[1mm]
[y_j,e_1]=y_{j+1},& &1\leq j\leq n-1,\\[1mm]
[y_1,y_1]=e_1,& [y_j,y_1]=e_{j+1}, &2\leq j\leq n-1,\\[1mm]
[e_1,y_1]=\frac{1}{2}y_2,& [e_i,y_1]=\frac{1}{2}y_{i}, & 3\leq i\leq n,\\[1mm]
[y_1,e_2]=y_n,\\[1mm]
[y_i,x]=(2i-1)y_i,& & 1\leq i\leq n,\\[1mm]
 [x,y_1]=-y_1,\\[1mm]
[e_1,x]=2e_1,& [x,e_1]=-2e_1, \\[1mm]
[e_2,x]=2(n-1)e_2,& [x,e_2]=-2(n-1)e_2 -2e_n,\\[1mm]
[e_i,x]=2(i-1)e_i,& & 3\leq i\leq n. \\[1mm]
\end{array}\right.$$
$$SH_3(\gamma) (n \ \text{is odd}):\left\{\begin{array}{lll}
[e_1,e_1]=e_3,&[e_i,e_1]=e_{i+1}, & 3\leq i\leq n-1,\\[1mm]
[y_j,e_1]=y_{j+1}, & &1\leq j\leq n-1,\\[1mm]
[y_1,y_1]=e_1,& [y_j,y_1]=e_{j+1}, &2\leq j\leq n-1,\\[1mm]
[e_1,y_1]=\frac{1}{2}y_2,& [e_i,y_1]=\frac{1}{2}y_{i}, & 3\leq i\leq n,\\[1mm]
[e_1,e_2]=e_{\frac{n+3}2},& [e_j,e_2]=e_{j+\frac{n-1}2},&3\leq j\leq n-2,\\[1mm]
[y_1,e_2]=y_{\frac{n+1}2},& [y_j,e_2]=y_{j+\frac{n-1}2},&2\leq j\leq n-2,\\[1mm]
[e_2,e_2]=\gamma e_n,& & \gamma\neq 0,\\[1mm]
[y_i,x]=(2i-1)y_i,& & 1\leq i\leq n,\\[1mm]
 [x,y_1]=-y_1,\\[1mm]
[e_1,x]=2e_1,& [x,e_1]=-2e_1, \\[1mm]
[e_2,x]=(n-1)e_2,& [x,e_2]=-(n-1)e_2-2e_\frac{n+1}2,\\[1mm]
[e_i,x]=2(i-1)e_i,& & 3\leq i\leq n, \\[1mm]
\end{array}\right.$$
$$SH_4:\left\{\begin{array}{lll}
[e_1,e_1]=e_3,& [e_i,e_1]=e_{i+1}, & 3\leq i\leq n-1,\\[1mm]
[y_j,e_1]=y_{j+1},& &1\leq j\leq n-1,\\[1mm]
[y_1,y_1]=e_1,& [y_j,y_1]=e_{j+1}, &2\leq j\leq n-1,\\[1mm]
[e_1,y_1]=\frac{1}{2}y_2,& [e_i,y_1]=\frac{1}{2}y_{i}, & 3\leq i\leq n,\\[1mm]
[e_2,e_2]=e_n,\\[1mm]
[y_i,x]=(2i-1)y_i,& & 1\leq i\leq n,\\[1mm]
 [x,y_1]=-y_1,\\[1mm]
[e_1,x]=2e_1,& [x,e_1]=-2e_1, \\[1mm]
[e_2,x]=(n-1)e_2,& [x,e_2]=-(n-1)e_2,\\[1mm]
[e_i,x]=2(i-1)e_i,& & 3\leq i\leq n, \\[1mm]
\end{array}\right.$$
\end{theorem}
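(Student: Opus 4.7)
The plan is to adapt the single-derivation branch of the proof of Theorem \ref{solvH(0)} to the rigid situation imposed by a non-split nilradical. By Corollary \ref{corH}, the non-split $N$ belongs to one of four subfamilies (exactly one $\beta_t\neq 0$; only $\delta\neq 0$; $\beta_{(n+3)/2}\neq 0$ together with $\gamma\neq 0$, $n$ odd; or only $\gamma\neq 0$), and in every case the system \eqref{eq4.1} collapses to a single linear relation between $a_1$ and $b_2$. Thus Proposition \ref{difH} yields a unique nil-independent even derivation up to scale, so $\dim L-\dim N=1$ and I can choose $x\in L_0\setminus N$ with $R_x$ realising this derivation, normalised to $a_1=1$; the constraint then fixes $b_2\in\{2(t-2),\ 2(n-1),\ n-1,\ n-1\}$ according to the four cases.

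Next I would carry out the triangular change of basis $e_i'=e_i+\sum_{k\geq 2}A_ke_{i+k-1}$, $y_i'=y_i+\sum_{k\geq 2}A_ky_{i+k-1}$ (with $e_2$ unchanged) that was used in Theorem \ref{solvH(0)} to eliminate the off-diagonal parameters $a_2,\dots,a_{n-1}$, reducing $R_x$ on $N$ to the eigenform displayed in the statement. The right-annihilator identities inherited from $N$ then force $[x,e_i]=0$ for $i\geq 3$ and $[x,y_j]=0$ for $j\geq 2$. I would parametrise the remaining brackets $[x,e_1]$, $[x,y_1]$, $[x,e_2]$, $[x,x]$ with unknown coefficients and pin them down by the Leibniz superidentity on the familiar triples involving pairs from $\{x,y_1,e_1,e_2\}$, together with the identity $[e_1,x]+[x,e_1]\in\operatorname{Ann}_r(L)$ and the final shifts $x\mapsto x-\sum c_ie_i$ or $e_2\mapsto c\,e_2$ that absorb the residual freedom.

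The main obstacle is the new coupling between the non-zero parameter of the nilradical and $[x,e_2]$: the superidentities on the triples $\{x,e_2,e_1\}$ and $\{x,e_2,y_1\}$ now pick up extra right-hand sides coming from the non-trivial products $[e_1,e_2]$ and $[y_1,e_2]$ in $N$, and it is precisely this coupling that forces the additional summand $-2e_{t-1}$ in $[x,e_2]$ for $SH_1(t)$, $-2e_{n-1}$ for $SH_2$, and $-2e_{(n+1)/2}$ for $SH_3(\gamma)$, whereas the pure $\gamma$ case $SH_4$ escapes this term. I expect the bulk of the work to lie in verifying case by case that these summands cannot be killed by any remaining base change, that the additional identities $\{x,x,e_2\}$ and $\{x,e_2,e_2\}$ reduce to $[x,x]=0$ except in $SH_3$ where $\gamma$ genuinely survives as a modulus (because the scaling of $e_2$ is pinned jointly by $[e_1,e_2]$ and $[e_2,e_2]$), and in establishing pairwise non-isomorphism through the combinatorial invariants $t$, the eigenvalue of $R_x$ on $e_2$, the presence or absence of $[e_2,e_2]$, and the value of $\gamma$.
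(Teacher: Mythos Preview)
Your plan is essentially the paper's own proof: the same codimension-one reduction via Proposition~\ref{difH} and Corollary~\ref{corH}, the same triangular base change killing $a_2,\dots,a_{n-1}$, the same annihilator and Leibniz-triple bookkeeping, and the same case split leading to $SH_1$--$SH_4$. One small slip: for $SH_2$ the extra summand in $[x,e_2]$ is $-2e_n$, not $-2e_{n-1}$ (here $b_2=2(n-1)$ and the nilradical relation is $[y_1,e_2]=\delta y_n$), and note that in the paper the vanishing of $[x,x]$ in the pure-$\gamma$ case $SH_4$ comes from the triple $\{e_2,x,x\}$ together with $[e_2,e_2]=\gamma e_n$, not from $\{x,e_2,e_2\}$.
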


\begin{proof}
From Proposition \ref{difH}, we have that any non-split superalgebra from the class $H(\beta_4, \beta_5, \dots, \beta_n,\delta, \gamma)$
has a maximum one nil-independent even derivation. Then we have that, the codimension of the sol\-vable superalgebra $L$ with such nilradicals may be equal to one.
Let $\{e_1,e_2,\dots, e_n, , x,  y_1,y_2\ \dots, y_n\}$ be a basis of the superalgebra $L = L_0 \oplus L_1,$ such that
$L_0 = \{e_1,e_2,\dots, e_n, x\}$ and $L_1 = \{y_1,y_2\ \dots, y_n\}.$ Since, the operator of right multiplication $R_{x}$ is a derivation of $H(\beta_4, \beta_5, \dots, \beta_n,\delta, \gamma),$ then
using Proposition \ref{difH}, we can assume that
$$\begin{array}{lll}
[y_i,x]=(2i-1)a_1y_i+a_2y_{i+1}+\dots+a_{n-i+1}y_n,& 1\leq i\leq n,\\[1mm]
[e_1,x]=2a_1e_1+a_2e_{3}+\dots+a_{n-1}e_n,\\[1mm]
[e_2,x]=b_2e_2,\\[1mm]
[e_i,x]=2(i-1)a_1 e_i+a_2e_{i+1}+\dots+a_{n-i+1}e_n,& 3\leq i\leq n.
\end{array}$$

Moreover, if $\beta_i \neq 0,$ for some $i (4 \leq i \leq n),$ then $b_2=2(i-2)a_1,$
if $\beta_i = 0,$ for any $i (4 \leq i \leq n)$ and $\delta \neq 0,$ then $b_2=2(n-1)a_1,$
if $\beta_i = 0,$ for any $i (4 \leq i \leq n),$ $\delta = 0$ and $\gamma \neq 0,$ then $b_2=(n-1)a_1.$

Thus, from non-nilpotency of even derivation of the
non-split superalgebra from the class $H(\beta_4, \beta_5, \dots, \beta_n,\delta, \gamma),$ we conclude that $a_1 \neq 0.$ Therefore, we can suppose $a_1=1$ and considering the following change of basis
$$\left\{\begin{array}{llll}
y_i'=y_i+A_2y_{i+1}+A_3y_{i+2}+\dots+A_{n-i+1}y_n,& 1\leq i\leq n,\\[1mm]
e_1'=e_1+A_2e_3+A_3e_4+\dots+A_{n-1}e_n,\\[1mm]
e_2=e_2,\\[1mm]
e_i'=e_i+A_2e_{i+1}+A_3e_{i+2}+\dots+A_{n-i+1}e_n, &3\leq i\leq n,
\end{array}\right.$$
 where
$$A_k=-\frac{a_k+A_2a_{k-1}+A_3a_{k-2}+\dots+A_{k-1}a_{2}}{2(k-1)}, \ 2\leq k\leq n,$$
 one can assume $a_i=0, \ 2\leq i\leq n.$

From \eqref{eqH}, we can easily get that the basis elements $e_3, e_4, \dots, e_n, y_2, y_3, \dots, y_n$ belongs to the right annihilator of the superalgebra $L.$ Thus, we have
$$[x,e_i]=0, \ 3\leq i\leq n, \qquad [x,y_j]=0, \ 2\leq j\leq n.$$

Put
 $$\begin{cases}
[x,y_1]=\alpha_1y_1+\alpha_2y_2+\dots+\alpha_ny_n,\\[1mm]
[x,e_2]=\mu_1e_1+\mu_2e_2+\dots+\mu_ne_n,\\[1mm]
[x,x]=\gamma_1e_1+\gamma_2e_2+\dots+\gamma_ne_n.\\[1mm]
\end{cases}$$

Using Leibniz superidentity we get:
$$[x,e_1]=[x,[y_1,y_1]]=2[[x,y_1], y_1]=2\alpha_1e_1+2\alpha_2e_3+\dots+2\alpha_{n-1}e_{n}.$$

Making the change  $x'=x-2(\alpha_2e_1 + \alpha_3e_3+\dots+\alpha_{n}e_{n}),$ we can assume that $\alpha_i=0, \ 2\leq i\leq n.$

Moreover, from $0=[y_1,[x,x]],$ we obtain
\begin{equation}\label{eq_1} \gamma_1=0, \quad \gamma_2\beta_i=0, \quad 4\leq i\leq n,  \quad \gamma_2 \delta=0. \end{equation}

The Leibniz superidentity $[x,[y_1,x]]=[[x,y_1],x]-[[x,x],y_1]$ gives us the following
$$\gamma_i=0, \quad 3\leq i\leq n.$$

Since $[e_1,x]+[x,e_1] \in \operatorname{Ann}_r(L),$ we get that $\alpha_1=-1.$
Thus, we have the following multiplications
$$\left\{\begin{array}{llll}
[y_i,x]=(2i-1)y_i,& 1\leq i\leq n,\\[1mm]
[e_1,x]=2e_1, & [e_2,x]=b_2e_2, &
[e_i,x]=2(i-1)e_i,& 3\leq i\leq n,\\[1mm]
[x,e_1]=-2e_1, & [x,e_2]=\sum\limits_{k=1}^n\mu_ke_k,\\[1mm]
[x,y_1]=-y_1, & [x,x]=\gamma_2e_2.\\[1mm]
\end{array}\right.$$

Now, using Corollary \ref{corH}, consider the following cases:

\textbf{Case 1.} Let $N=H(0,0, \dots, \beta_{t},0, \dots, 0, 0),$ where $4\leq t\leq n,$ i.e., nilradical $N$ has the multiplication
\begin{equation}\label{eq(Case1)}\left\{\begin{array}{lll}
[e_1,e_1]=e_3,&[e_i,e_1]=e_{i+1}, & 3\leq i\leq n-1,\\[1mm]
[y_j,e_1]=y_{j+1},& &1\leq j\leq n-1,\\[1mm]
[y_1,y_1]=e_1,&[y_j,y_1]=e_{j+1}, &2\leq j\leq n,\\[1mm]
[e_1,y_1]=\frac{1}{2}y_2,& [e_i,y_1]=\frac{1}{2}y_{i}, & 3\leq i\leq n-1,\\[1mm]
[e_1,e_2]=\beta_te_{t},&[e_j,e_2]=\beta_{t}e_{j+t-2},&3\leq j\leq n-2,\\[1mm]
[y_1,e_2]=\beta_{t}y_{t-1},&[y_j,e_2]=\beta_{t}y_{j+t-2},&2\leq j\leq n-2.
\end{array}\right.\end{equation}

Since $\beta_t\neq 0,$ then from \eqref{eq_1}, we get $\gamma_2=0$ and changing $e_2'=\frac{1}{\beta_t}e_2,$ we have $\beta_t=1$ and $b_2=2(t-2).$
Applying the Leibniz superidentity for the triples $\{e_1, x, e_2\}, \{x, e_2, y_1\},$ we have
$$\mu_1=0, \ \mu_2=2(2-t), \quad  \mu_i=0, \ 3\leq i\neq t-1\leq n, \qquad \mu_{t-1}=-2.$$

Therefore, we obtain the superalgebra $SH_1(t).$

\textbf{Case 2.} Consider the case when nilradical is
$$H(0,0, \dots, 0, \delta,0):\left\{\begin{array}{lll}
[e_1,e_1]=e_3,& [e_i,e_1]=e_{i+1}, & 3\leq i\leq n-1,\\[1mm]
[y_j,e_1]=y_{j+1},& &1\leq j\leq n-1,\\[1mm]
[y_1,y_1]=e_1,&[y_j,y_1]=e_{j+1}, &2\leq j\leq n-1,\\[1mm]
[e_1,y_1]=\frac{1}{2}y_2,&[e_i,y_1]=\frac{1}{2}y_{i}, & 3\leq i\leq n,\\[1mm]
[y_1,e_2]=\delta y_n.
\end{array}\right.$$

Since $\delta\neq 0,$ then from \eqref{eq_1} we have $\gamma_2=0$ and
by changing $e_2'=\frac{1}{\delta} e_2,$ we can assume $\delta=1.$

From the Leibniz superidentity for $\{x,e_2, y_1\},$ $\{y_1, x, e_2\},$  we obtain
$$\mu_1=0,\quad \mu_2=2(1-n),\quad \mu_i=0, \quad 3\leq i\leq n-1, \quad \mu_n=-2.$$

Therefore, we have the superalgebra $SH_2$.

\textbf{Case 3.} Let the nilradical of the superalgebra is
 $$\footnotesize H(0,0, \dots, \beta_{\frac{n+3}2},0, \dots, 0, \gamma): \left\{\begin{array}{lll}
[e_1,e_1]=e_3,&[e_i,e_1]=e_{i+1}, & 3\leq i\leq n-1,\\[1mm]
[y_j,e_1]=y_{j+1}, &1\leq j\leq n-1,\\[1mm]
[y_1,y_1]=e_1,& [y_j,y_1]=e_{j+1}, &2\leq j\leq n-1,\\[1mm]
[e_1,y_1]=\frac{1}{2}y_2,&[e_i,y_1]=\frac{1}{2}y_{i}, & 3\leq i\leq n,\\[1mm]
[e_1,e_2]=\beta_{\frac{n+3}2}e_{\frac{n+3}2},&[e_j,e_2]=\beta_{\frac{n+3}2}e_{j+\frac{n-1}2},&3\leq j\leq n-2,\\[1mm]
[y_1,e_2]=\beta_{\frac{n+3}2}y_{\frac{n+1}2},&[y_j,e_2]=\beta_{\frac{n+3}2}y_{j+\frac{n-1}2},&2\leq j\leq n-2,\\[1mm]
[e_2,e_2]=\gamma e_n.
\end{array}\right.$$

In this case, we have $b_2=n-1.$

If $\beta_{\frac{n+3}2} \neq 0,$ then from \eqref{eq_1}, we get $\gamma_2=0.$
Taking the change $e_2'=\frac{1}{\beta_{\frac{n+3}2}}e_2$, we can suppose $\beta_{\frac{n+3}2}=1.$
Considering the Leibniz superidentity for $\{x, e_2, y_1\}$, $\{e_1, x, e_2\},$ we get
$$\mu_1=0, \quad \mu_2=1-n,\quad \mu_\frac{n+1}{2}=-2, \quad \mu_i=0, \quad 3\leq i \leq n,  (i\neq \frac{n+1}{2}).$$

Thus, we have the superalgebra $SH_3(\gamma).$

If $\beta_{\frac{n+3}2} = 0,$ then $\gamma\neq 0$ and by changing $e_2'=\frac{1}{\sqrt{\gamma}}e_2,$ we can suppose $\gamma=1.$
Considering Leibniz superidentity for $\{x, e_2, y_1\}$, $\{e_2, x, x\}$, $\{e_2,x,e_2\},$ we get
$$\gamma_2=0, \quad \mu_1=0,\quad \mu_2=1-n,\quad \mu_i=0,\quad 3\leq i\leq n.$$
Thus, we obtain the superalgebra $SH_4$.
\end{proof}

Now we give the description of solvable Leibniz superalgebras whose nilradical is isomorphic to the superalgebra from the class $G(\beta_4, \beta_5, \dots, \beta_n, \gamma)$.
In the following theorem, we consider the case when nilradical is $G(0,0,\dots,0)$.

\begin{theorem} Let $L=L_0\oplus L_1$ be a solvable Leibniz superalgebra whose nilradical is isomorphic to the superalgebra $G(0,0,\dots,0)$. Then $L$ is isomorphic to one of the following pairwise non-isomorphic superalgebras:
$$MG_1:\left\{\begin{array}{llll}
[e_1,e_1]=e_3,&[e_i,e_1]=e_{i+1}, & 3\leq i\leq n-1,\\[1mm]
[y_j,e_1]=y_{j+1},& &1\leq j\leq n-2,\\[1mm]
[y_1,y_1]=e_1,& [y_j,y_1]=e_{j+1}, &2\leq j\leq n-1,\\[1mm]
[e_1,y_1]=\frac{1}{2}y_2,& [e_i,y_1]=\frac{1}{2}y_{i}, & 3\leq i\leq n-1,\\[1mm]
[e_1,x_1]=2e_1,&
[e_i,x_1]=2(i-1)e_i, & 3\leq i\leq n,\\[1mm]
[y_i,x_1]=(2i-1)y_i,& & 1\leq i\leq n-1,\\[1mm]
[x_1,e_1]=-2e_1,& [x_1,y_1]=-y_1, & [e_2,x_2]=e_2, \\[1mm]
\end{array}\right.$$
$$MG_2:\left\{\begin{array}{llll}
[e_1,e_1]=e_3,&[e_i,e_1]=e_{i+1}, & 3\leq i\leq n-1,\\[1mm]
[y_j,e_1]=y_{j+1},& &1\leq j\leq n-2,\\[1mm]
[y_1,y_1]=e_1,& [y_j,y_1]=e_{j+1}, &2\leq j\leq n-1,\\[1mm]
[e_1,y_1]=\frac{1}{2}y_2,& [e_i,y_1]=\frac{1}{2}y_{i}, & 3\leq i\leq n-1,\\[1mm]
[e_1,x_1]=2e_1,&[e_i,x_1]=2(i-1)e_i, & 3\leq i\leq n,\\[1mm]
[y_i,x_1]=(2i-1)y_i, & 1\leq i\leq n-1,\\[1mm]
[x_1,e_1]=-2e_1, & [x_1,y_1]=-y_1, \\[1mm]
[e_2,x_2]=e_2, & [x_2,e_2]=-e_2, \\[1mm]
\end{array}\right.$$

$$G_1(b):\left\{\begin{array}{llll}
[e_1,e_1]=e_3,&[e_i,e_1]=e_{i+1}, & 3\leq i\leq n-1,\\[1mm]
[y_j,e_1]=y_{j+1},& &1\leq j\leq n-2,\\[1mm]
[y_1,y_1]=e_1,& [y_j,y_1]=e_{j+1}, &2\leq j\leq n-1,\\[1mm]
[e_1,y_1]=\frac{1}{2}y_2,& [e_i,y_1]=\frac{1}{2}y_{i}, & 3\leq i\leq n-1,\\[1mm]
[y_i,x]=(2i-1)y_i, & & 1\leq i\leq n-1,\\[1mm]
[e_1,x]=2e_1,& [e_2,x]=be_2,& [e_i,x]=2(i-1)e_i,& 3\leq i\leq n,\\[1mm]
 [x,e_1]=-2e_1, & [x,e_2]=-be_2,& [x,y_1]=-y_1, & b\neq 0,
\end{array}\right.$$
$$G_2(b):\left\{\begin{array}{llllll}
[e_1,e_1]=e_3,&[e_i,e_1]=e_{i+1}, & 3\leq i\leq n-1,\\[1mm]
[y_j,e_1]=y_{j+1},& &1\leq j\leq n-2,\\[1mm]
[y_1,y_1]=e_1,& [y_j,y_1]=e_{j+1}, &2\leq j\leq n-1,\\[1mm]
[e_1,y_1]=\frac{1}{2}y_2,& [e_i,y_1]=\frac{1}{2}y_{i}, & 3\leq i\leq n-1,\\[1mm]
[y_i,x]=(2i-1)y_i, & & 1\leq i\leq n-1,\\[1mm]
[e_1,x]=2e_1, & [e_2,x]=be_2, & [e_i,x]=2(i-1)e_i,& 3\leq i\leq n,\\[1mm]
[x,e_1]= - 2e_1,& [x,y_1]=-y_1,\\[1mm]
\end{array}\right.$$

$$G_3:\left\{\begin{array}{lll}
[e_1,e_1]=e_3, & [e_i,e_1]=e_{i+1}, & 3\leq i\leq n-1,\\[1mm]
[y_j,e_1]=y_{j+1}, &&1\leq j\leq n-2,\\[1mm]
[y_1,y_1]=e_1, & [y_j,y_1]=e_{j+1}, &2\leq j\leq n-1,\\[1mm]
[e_1,y_1]=\frac{1}{2}y_2, & 
[e_i,y_1]=\frac{1}{2}y_{i}, & 3\leq i\leq n-1,\\[1mm]
[y_i,x]=(2i-1)y_i,& &1\leq i\leq n-1,\\[1mm]
[e_1,x]=2e_1, & [x,e_1]=-2e_1,\\[1mm]
[e_2,x]=2(n-1)e_2+e_n, & 
[e_i,x]=2(i-1)e_i,& 3\leq i\leq n,\\[1mm]
[x,y_1]=-y_1,\\[1mm]
\end{array}\right.$$ $$
G_4(\gamma,b):\left\{\begin{array}{lll}
[e_1,e_1]=e_3, &
[e_i,e_1]=e_{i+1}, & 3\leq i\leq n-1,\\[1mm]
[y_j,e_1]=y_{j+1}, & &1\leq j\leq n-2,\\[1mm]
[y_1,y_1]=e_1, &
[y_j,y_1]=e_{j+1}, &2\leq j\leq n-1,\\[1mm]
[e_1,y_1]=\frac{1}{2}y_2, & [e_i,y_1]=\frac{1}{2}y_{i}, & 3\leq i\leq n-1,\\[1mm]
[y_i,x]=(2i-1)y_i,& 1\leq i\leq n-1,\\[1mm]
[e_1,x]=2e_1, & [x,e_1]=-2e_1,\\[1mm]
[e_2,x]=be_n, &
[e_i,x]=2(i-1)e_i,& 3\leq i\leq n,\\[1mm]
[x,y_1]=-y_1,& [x,x]=\gamma e_2, & (\gamma, b)=(0,1), (1,0), (1,1)
\end{array}\right.$$
$$G_5:\left\{\begin{array}{llllll}
[e_1,e_1]=e_3,& [e_i,e_1]=e_{i+1}, & 3\leq i\leq n-1,\\[1mm]
[y_j,e_1]=y_{j+1},& &1\leq j\leq n-2,\\[1mm]
[y_1,y_1]=e_1,& [y_j,y_1]=e_{j+1}, &2\leq j\leq n-1,\\[1mm]
[e_1,y_1]=\frac{1}{2}y_2,& [e_i,y_1]=\frac{1}{2}y_{i}, & 3\leq i\leq n-1,\\[1mm]
[e_1,x]=\sum\limits_{k=3}^na_{k-1}e_k,&
[e_2,x]=e_2,&
[e_i,x]=\sum\limits_{k=i+1}^na_{k+1-i}e_k, & 3\leq i\leq n,\\[1mm]
[x,x]=\gamma e_n,& [y_i,x]=\sum\limits_{k=i+1}^{n-1}a_{k+1-i}y, & 1\leq i\leq n-1,
\end{array}\right.$$
$$G_6:\left\{\begin{array}{llll}
[e_1,e_1]=e_3,&
[e_i,e_1]=e_{i+1}, & 3\leq i\leq n-1,\\[1mm]
[y_j,e_1]=y_{j+1},& &1\leq j\leq n-2,\\[1mm]
[y_1,y_1]=e_1,& [y_j,y_1]=e_{j+1}, &2\leq j\leq n-1,\\[1mm]
[e_1,y_1]=\frac{1}{2}y_2,& [e_i,y_1]=\frac{1}{2}y_{i}, & 3\leq i\leq n-1,\\[1mm]
[e_1,x]=\sum\limits_{k=3}^na_{k-1}e_k,&
[e_2,x]=e_2,&
[e_i,x]=\sum\limits_{k=i+1}^na_{k+1-i}e_k, & 3\leq i\leq n,\\[1mm]
[x,e_2]=-e_2,& [x,x]=\gamma e_n,& [y_i,x]=\sum\limits_{k=i+1}^{n-1}a_{k+1-i}y, & 1\leq i\leq n-1.\\[1mm]
\end{array}\right.$$
Note that, the first non-vanishing parameter $\{a_2, a_3,\dots,a_{n-1}, \gamma\}$
in the algebras  $G_5$ and $G_6$ can be scaled to $1$.
\end{theorem}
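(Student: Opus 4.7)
The plan is to mirror the strategy of Theorem~\ref{solvH(0)}, using Proposition~\ref{difG} to read off the even derivations of $N=G(0,0,\dots,0)$. Because all $\beta_i$ and $\gamma$ vanish, Proposition~\ref{difG} places no restriction on $(a_1,b_2)$, while the remaining parameters $a_2,\dots,a_{n-1},b_n$ correspond to strictly upper-triangular, hence nilpotent, derivations. Therefore $N$ admits exactly two nil-independent even derivations, so $\dim L-\dim N\le 2$ and the argument splits by codimension.

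First I would treat $\dim L-\dim N=2$. Pick $x_1,x_2\in L_0$ complementing $N$ so that $R_{x_1},R_{x_2}$ realise the nil-independent pair with parameters $(a_1,b_2)=(1,0)$ and $(0,1)$. The elements $e_3,\dots,e_n,y_2,\dots,y_{n-1}$ are sums of the form $[a,b]+(-1)^{\alpha\beta}[b,a]$ inside $N$, hence belong to $\operatorname{Ann}_r(L)$, so $[x_j,e_i]=0$ for $i\ge 3$ and $[x_j,y_i]=0$ for $i\ge 2$. A shearing change of basis of exactly the shape used in Theorem~\ref{solvH(0)} clears $a_2,\dots,a_{n-1}$ in both $R_{x_1}$ and $R_{x_2}$, and an analogous shift of $x_1,x_2$ by multiples of $e_i$ absorbs the extra constants in $[x_j,y_1]$. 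Applying the Leibniz superidentity to triples built from $\{x_1,x_2,e_1,e_2,y_1\}$, together with $[e_1,x_j]+[x_j,e_1]\in\operatorname{Ann}_r(L)$, forces $[x_1,y_1]=-y_1$ and yields a dichotomy $[x_2,e_2]\in\{0,-e_2\}$, producing the two algebras $MG_1$ and $MG_2$.

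For $\dim L-\dim N=1$ the analysis divides into two subcases according to whether $a_1\ne 0$ or $a_1=0$. When $a_1\ne 0$, rescale to $a_1=1$ and apply the shearing to clear $a_2,\dots,a_{n-1}$. The Leibniz superidentity on $\{x,y_1,y_1\}$ expresses $[x,e_1]$ in terms of $[x,y_1]$, and identities on $\{y_1,x,x\}, \{e_1,x,e_1\}$, together with $[e_1,x]+[x,e_1]\in\operatorname{Ann}_r(L)$, force $\alpha_1=-1$ and $\gamma_1=0$, $\gamma_i=0$ for $i\ge 3$. The remaining coupled conditions on $\mu_2,\mu_n,b_2,b_n,\gamma_2$ obtained from the triples $\{x,e_2,e_1\}, \{x,e_2,y_1\}, \{x,x,e_2\}$ then resolve into the four families $G_1(b), G_2(b), G_3, G_4(\gamma,b)$. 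A key new feature relative to the $H$-case is the $b_n e_n$ slot of $R_x(e_2)$: when $2(n-1)-b_2\ne 0$, a shift $e_2\mapsto e_2+ce_n$ eliminates it, but at the resonance $b_2=2(n-1)$ this slot survives and produces the $e_n$ terms of $G_3$ and $G_4(\gamma,b)$. When $a_1=0$, non-nilpotency forces $b_2\ne 0$, and I normalise $b_2=1$; then $\alpha_1=0$ and the dichotomy $\mu_2\in\{0,-1\}$ produces $G_5$ and $G_6$, after rescaling the first non-vanishing element of $\{a_2,\dots,a_{n-1},\gamma\}$ to $1$ by a diagonal change of basis.

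The main technical obstacle will be the bookkeeping in the codimension-one subcases: the coupled quadratic constraints such as $\mu_2(\mu_2+b_2)=0$ and $\gamma_2\mu_2=0$ bifurcate in several ways, and each surviving change of basis used to absorb residual constants must respect the normalisations already fixed. Verifying pairwise non-isomorphism of the full list --- in particular separating the three admissible parameter pairs of $G_4(\gamma,b)$ and distinguishing $G_1,\dots,G_4$ from the split extensions $MG_1,MG_2$ --- will further require invariants such as the Jordan type of $R_x$ on $N$, the non-vanishing of $[x,x]$, and the action of $R_{x_2}$ on $\langle e_2\rangle$.
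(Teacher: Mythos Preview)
Your approach coincides with the paper's, which simply records that the proof is carried out analogously to Theorem~\ref{solvH(0)}. One small slip in your sketch: the family $G_4(\gamma,b)$ arises in the subcase $b_2=0$ (together with the extra $b_n e_n$ slot of $R_x(e_2)$ from Proposition~\ref{difG} and the term $[x,x]=\gamma_2 e_2$), not at the resonance $b_2=2(n-1)$, which only accounts for $G_3$.
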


\begin{proof}
The proof is similar to the proof of Theorem \ref{solvH(0)}.
\end{proof}

Now we consider the case when the nilradical is a non-split superalgebra from the class $G(\beta_4, \beta_5, \dots, \beta_n, \gamma),$ i.e., at least one of the parameters is not equal to zero.

\begin{theorem} Let $L=L_0\oplus L_1$ be a solvable Leibniz superalgebra whose nilradical is isomorphic to a non-split superalgebra $N$ from the class $G(\beta_4, \beta_5, \dots, \beta_n, \gamma).$ Then $L$ is isomorphic to one of the following pairwise non-isomorphic superalgebras:
$$\small SG_1({t}) (4\leq t\leq n):\left\{\begin{array}{lll}
[e_1,e_1]=e_3,& [e_i,e_1]=e_{i+1}, & 3\leq i\leq n-1,\\[1mm]
[y_j,e_1]=y_{j+1},& &1\leq j\leq n-2,\\[1mm]
[y_1,y_1]=e_1,& [y_j,y_1]=e_{j+1}, &2\leq j\leq n-1,\\[1mm]
[e_1,y_1]=\frac{1}{2}y_2,& [e_i,y_1]=\frac{1}{2}y_{i}, & 3\leq i\leq n-1,\\[1mm]
[e_1,e_2]=e_{t},&[e_j,e_2]=e_{j+t-2},&3\leq j\leq n-2,\\[1mm]
 [y_j,e_2]=y_{j+t-2},& &2\leq j\leq n-3,\\[1mm]
[y_i,x]=(2i-1)y_i,& & 1\leq i\leq n-1,\\[1mm]
 [x,y_1]=-y_1,& [e_1,x]=2e_1,& [x,e_1]=-2e_1, \\[1mm]
[e_2,x]=2(t-2)e_2,& [x,e_2]=-2(t-2)e_2-2e_{t-1},\\[1mm]
[e_i,x]=2(i-1)e_i,& &3\leq i\leq n, \\[1mm]
\end{array}\right.$$
$$\small SG_2(\gamma) (n \  \text{is odd}):\left\{\begin{array}{lll}
[e_1,e_1]=e_3,& [e_i,e_1]=e_{i+1}, & 3\leq i\leq n-1,\\[1mm]
[y_j,e_1]=y_{j+1},& &1\leq j\leq n-2,\\[1mm]
[y_1,y_1]=e_1,& [y_j,y_1]=e_{j+1}, &2\leq j\leq n-1,\\[1mm]
[e_1,y_1]=\frac{1}{2}y_2,& [e_i,y_1]=\frac{1}{2}y_{i}, & 3\leq i\leq n-1,\\[1mm]
[e_1,e_2]=e_{\frac{n+3}2},& [e_j,e_2]=e_{j+\frac{n-1}2},&3\leq j\leq n-2,\\[1mm]
 [y_j,e_2]=y_{j+\frac{n-1}2},& &1\leq j\leq n-3,\\[1mm]
[e_2,e_2]=\gamma e_n, \gamma\neq 0, &  [x,y_1]=-y_1,\\[1mm]
[y_i,x]=(2i-1)y_i,& & 1\leq i\leq n-1,\\[1mm]
 [e_1,x]=2e_1,& [x,e_1]=-2e_1, \\[1mm]
[e_2,x]=(n-1)e_2,& [x,e_2]=-(n-1)e_2-2e_\frac{n+1}2,\\[1mm]
[e_i,x]=2(i-1)e_i,& & 3\leq i\leq n. \\[1mm]
\end{array}\right.$$
$$SG_3:\left\{\begin{array}{lll}
[e_1,e_1]=e_3,& [e_i,e_1]=e_{i+1}, & 3\leq i\leq n-1,\\[1mm]
[y_j,e_1]=y_{j+1},& &1\leq j\leq n-2,\\[1mm]
[y_1,y_1]=e_1,& [y_j,y_1]=e_{j+1}, &2\leq j\leq n-1,\\[1mm]
[e_1,y_1]=\frac{1}{2}y_2,& [e_i,y_1]=\frac{1}{2}y_{i}, & 3\leq i\leq n-1,\\[1mm]
[e_2,e_2]=e_n,& [x,y_1]=-y_1,\\[1mm]
[y_i,x]=(2i-1)y_i,& & 1\leq i\leq n-1,\\[1mm]
[e_1,x]=2e_1,& [x,e_1]=-2e_1, \\[1mm]
[e_2,x]=(n-1)e_2,& [x,e_2]=-(n-1)e_2,\\[1mm]
[e_i,x]=2(i-1)e_i,& & 3\leq i\leq n, \\[1mm]
\end{array}\right.$$
\end{theorem}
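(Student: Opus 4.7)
The plan is to mirror the proof of Theorem~\ref{solvH}, adapting the case analysis to the parameter configurations listed in Corollary~\ref{corG}. From Proposition~\ref{difG}, any non-split superalgebra in the class $G(\beta_4,\dots,\beta_n,\gamma)$ admits at most one nil-independent even derivation, so a solvable extension $L$ can only enlarge the nilradical $N$ by a single even element $x$, and the odd part of $L$ coincides with that of $N$. Using Proposition~\ref{difG} to read off $R_x$, non-nilpotency of $R_x$ forces $a_1\neq 0$; I would rescale to $a_1=1$ and then apply the standard triangular change of basis (with coefficients $A_k=-\frac{a_k+A_2a_{k-1}+\dots+A_{k-1}a_2}{2(k-1)}$) to kill all $a_i$ for $i\ge 2$, exactly as in Theorem~\ref{solvH}.

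Next, from the multiplication table \eqref{eqG} I would identify that $e_3,\dots,e_n$ and $y_2,\dots,y_{n-1}$ lie in $\operatorname{Ann}_r(L)$, so the only nontrivial left-multiplications by $x$ that remain are $[x,e_1]$, $[x,e_2]$, $[x,y_1]$ and $[x,x]$. Writing these as generic linear combinations, the Leibniz superidentity on $\{x,y_1,y_1\}$ computes $[x,e_1]$ in terms of $[x,y_1]$; a further change $x\mapsto x-2(\alpha_2 e_1+\alpha_3 e_3+\dots+\alpha_n e_n)$ normalises $[x,y_1]$ to $\alpha_1 y_1$, and the annihilator condition $[e_1,x]+[x,e_1]\in\operatorname{Ann}_r(L)$ forces $\alpha_1=-1$. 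The identities $0=[y_1,[x,x]]$ and $[x,[y_1,x]]=[[x,y_1],x]-[[x,x],y_1]$ then eliminate $\gamma_1,\gamma_3,\dots,\gamma_n$ and constrain $\gamma_2$ whenever some $\beta_i$ or $\gamma$ is nonzero.

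The remaining work splits into the three non-split configurations given in Corollary~\ref{corG}. If only one $\beta_t\neq 0$ (for $4\le t\le n$), then $b_2=2(t-2)$, rescaling $e_2$ gives $\beta_t=1$, and the identities on $\{e_1,x,e_2\}$ and $\{x,e_2,y_1\}$ pin down $[x,e_2]=-2(t-2)e_2-2e_{t-1}$, producing $SG_1(t)$. If $\beta_{(n+3)/2}\neq 0$ with $\gamma\neq 0$ and $n$ odd, then $b_2=n-1$ is forced by both conditions simultaneously; after rescaling $\beta_{(n+3)/2}=1$ and applying the same triples one obtains $[x,e_2]=-(n-1)e_2-2e_{(n+1)/2}$, which yields $SG_2(\gamma)$ (with $\gamma$ remaining as a genuine parameter since $\{e_2,x,e_2\}$ does not constrain it further). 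Finally, if all $\beta_i=0$ and only $\gamma\neq 0$, then $b_2=(n-1)$, rescaling gives $\gamma=1$, and the triples $\{x,e_2,y_1\}$, $\{e_2,x,x\}$, $\{e_2,x,e_2\}$ kill $\gamma_2$ and force $[x,e_2]=-(n-1)e_2$, producing $SG_3$.

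The main obstacle I expect is the bookkeeping in the middle case $SG_2(\gamma)$: both parameters $\beta_{(n+3)/2}$ and $\gamma$ are simultaneously active, so the constraints coming from $\gamma_2\beta_i=0$ and from the Leibniz superidentity on $\{e_2,x,e_2\}$ must be reconciled carefully to verify that $\gamma$ is a bona fide invariant (cannot be rescaled away once $e_2$ has been normalised by $\beta_{(n+3)/2}=1$), and that no further triple forces $\gamma=0$. Pairwise non-isomorphism of the listed families should then follow from comparing the adjoint spectra of $x$ on the span of the $e_i$, which differ across $SG_1(t)$, $SG_2(\gamma)$, $SG_3$ by the location of the eigenvalue $b_2$ and by the form of $[x,e_2]$.
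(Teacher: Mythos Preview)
Your proposal is correct and follows exactly the route the paper intends: its own proof is the single sentence ``carried out similarly to the proof of Theorem~\ref{solvH}'', and you have unpacked precisely that. The case split via Corollary~\ref{corG}, the normalisation $a_1=1$, the triangular basis change to kill the $a_i$, the identification of $\operatorname{Ann}_r(L)$, and the use of the triples $\{x,y_1,y_1\}$, $\{e_1,x,e_2\}$, $\{x,e_2,y_1\}$, $\{e_2,x,e_2\}$ all match Theorem~\ref{solvH} step for step.

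One small point you should not overlook when executing: Proposition~\ref{difG} gives $d(e_2)=b_2e_2+b_n e_n$, whereas Proposition~\ref{difH} gives only $d(e_2)=b_2e_2$. So in the $G$ case $R_x$ carries an extra term $[e_2,x]=b_2e_2+b_ne_n$ that is absent from the $H$ proof you are mirroring. None of the Leibniz superidentities you list kills $b_n$ (since $e_n\in\operatorname{Ann}_r(L)$), so you need an additional change $e_2'=e_2+ce_n$ with $c=-b_n/(2(n-1)-b_2)$; in each of the three non-split cases $b_2\in\{2(t-2),\,n-1\}$ so $2(n-1)-b_2\neq 0$ and this is legitimate. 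Once that is done, your argument goes through unchanged and yields exactly $SG_1(t)$, $SG_2(\gamma)$, $SG_3$.
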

The proof of this theorem is carried out similarly to the proof of Theorem \ref{solvH}.

\subsubsection*{Funding} This work was supported by the grant ''Automorphisms of operator algebras, classifications of infinite-dimensional non-associative algebras and superalgebras``, No. FZ-202009269, Ministry of Innovation Development of the Republic of Uzbekistan, Tashkent, Uzbekistan, 2021-2025.

\subsubsection*{Data Availability} The datasets generated during and/or analyzed during the current study are available from the corresponding author (Khudoyberdiyev A.Kh) on reasonable request.

\subsubsection*{Conflict of Interest} The authors have no conflicts and interests to declare that are relevant to the content of this article.

{\small\bibliography{commat}}

\EditInfo{May 25, 2023}{June 27, 2023}{Adam Chapman, Ivan Kaygorodov and Mohamed Elhamdadi}

\end{document}